\theoremstyle{plain}
\newtheorem{lemma}{Lemma}[section]
\newtheorem{theorem}[lemma]{Theorem}
\newtheorem{proposition}[lemma]{Proposition}
\newtheorem{corollary}[lemma]{Corollary}
\theoremstyle{definition}
\newtheorem{example}[lemma]{Example}
\newtheorem{definition}[lemma]{Definition}
\newtheorem{remark}[lemma]{Remark}
\numberwithin{equation}{section}
\def\begeq{\stepcounter{lemma}\begin{equation}}
\date{}
\begin{document}

\title{The $H$-polynomial of a Group Embedding\footnote{This is the text of a talk given at
``A Conference in Algebraic Geometry, Honoring F. Hirzebruch 80th birthday,
May 18-23, 2008, Bar Ilan University, Israel".}}

\author{ Lex E. Renner}
\date{May 2008}
\maketitle


\begin{abstract}
The Poincar\'e polynomial of a Weyl group calculates the Betti numbers of the projective homogeneous space $G/B$, while the $h$-vector of a simple polytope calculates the Betti numbers of the corresponding rationally smooth toric variety. There is a common generalization of these two extremes called the $H$-polynomial. It applies to projective, homogeneous spaces, toric varieties and, much more generally, to any algebraic variety $X$ where there is a connected, solvable, algebraic group acting with a finite number of orbits. We illustrate this situation by describing the $H$-polynomials of certain projective $G\times G$ - varieties $X$, where $G$ is a semisimple group and $B$ is a Borel subgroup of $G$. This description is made possible  by finding an appropriate cellular decomposition for $X$ and then describing the cells combinatorially in terms of the underlying monoid of $B\times B$ - orbits. The most familiar example here is the wonderful compactification of a semisimple group of adjoint type.
\end{abstract}


\section{Introduction}   \label{intro.sec}
\vspace{10pt}
Let $G_0$ be a semisimple algebraic group  and let $\rho : G_0\to End(V)$
be a representation of $G_0$. Define $Y_\rho$ to be the Zariski closure of
$G=[\rho(G_0)]\subseteq\mathbb{P}(End(V))$, the projective space associated with $End(V)$.
Finally, let $X_\rho$ be the normalization of
$Y_\rho$. $X_\rho$ is a projective, normal  $G\times G$-embedding of $G$. That is,
there is an open embedding $G\subseteq X_\rho$ such that the action
$G\times G\times G\to G$, $(g,h,x)\leadsto gxh^{-1}$, extends over $X_\rho$.
Furthermore $B\times B$ acts on $X_\rho$ with a finite number of orbits.
See \cite{AB} for an up-to-date description of these (and other) embeddings.
The problem here is to find a homologically useful description of
how $X_\rho$ fits together from these $B\times B$-orbits. This has been accomplished by
several authors in case $X$ is the wonderful embedding of an adjoint semisimple group.
See \cite{Brion0,DP,R4,Spring} for an assortment of approaches. The purpose
of this survey is to describe what can be done here for any rationally smooth
embedding of the form $X_\rho$.

There is very little mystery in stating the problem. Suppose we have a
rationally smooth, embedding $X_\rho$. We wish to describe and
calculate the Betti numbers of $X_\rho$ in terms of the $B\times B$-orbit structure
of $X_\rho$. Our approach is based on unraveling the so called $H$-polynomial of
$X_\rho$ (see Section \ref{hpoly.sec} below). This $H$-polynomial is the
primary combinatorial invariant of a group embedding. The challenge here is to
first organize the $B\times B$-orbits into ``rational" cells, and then to quantify these cells
in terms of salient Weyl group data and toric data. The overarching fact here is
Theorem 2.5 of \cite{R8} whereby we characterize the condition ``$X_\rho$ is rationally
smooth" in terms of related toric data. This allows us to obtain a homologically meaningful
decomposition of $X_\rho$ into rational cells. The idempotent system of the associated monoid
cone $M_\rho\subseteq End(V)$ then helps us calculate the dimension of each cell.

The $H$-polynomial is depicted as a synthesis of the $h$-polynomial from toric
geometry, and the length polynomial from the theory of projective homogeneous spaces.
The purpose of the $H$-polynomial is to encode topological information about
$X_\rho$ by organizing numerical and combinatorial properties of the $B\times B$-orbits
on $X_\rho$.

Throughout this survey we omit the proofs of many statements. The interested reader
should consult \cite{R6,R7,R8,R8.5,R9} for more details.

\section{$H$-polynomials}  \label{hpoly.sec}

One can associate with a smooth torus embedding its {\bf $h$-polynomial}. And with a projective
homogeneous space (or Schubert variety) we can associate its {\bf length polynomial}.
In both cases we obtain a polynomial that can be used to calculate Betti numbers. In more general
situations, like $G\times G$-embeddings, we need to define a synthesis of these two extremes.
This leads us naturally to the notion of an {\bf $H$-polynomial}, which is the obvious synthesis
of the $h$-polynomial and the length polynomial.

\subsection{Length Polynomials}

Let $(W,S)$ be a Weyl group with length function $l : W\to\mathbb{N}$. Define
the {\bf length Polynomial} $P_W(t)$ of $W$ by setting
\[
P_W(t)=\sum_{w\in W}t^{l(w)}.
\]
It is well-known, and much-studied, that $P_W(t)$ can be used to calculate
the Betti numbers of $G/B$. This generalizes to the other projective
homogeneous spaces. If $P\subseteq G$ is a parabolic subgroup then $P=P_J$ for
some unique $J\subseteq S$. We then define the length polynomial
of $W^J$ by setting
\[
P_{W^J}(t)=\sum_{w\in W^J}t^{l(w)}.
\]
Here $W^J$ is the set of minimal coset representatives of $W_J$ in $W$.

For example consider $S_4$, the symmetric group on four letters. Its canonical
generators are the three transpositions $r=(1,2)$, $s=(2,3)$ and $t=(3,4)$. It is easy
to calculate that
\begin{enumerate}
	\item[] $|\{x\in S_4\;|\; l(x)=0\}|=1$,
	\item[] $|\{x\in S_4\;|\; l(x)=1\}|=3$,
	\item[] $|\{x\in S_4\;|\; l(x)=2\}|=5$,
	\item[] $|\{x\in S_4\;|\; l(x)=3\}|=6$,
	\item[] $|\{x\in S_4\;|\; l(x)=4\}|=5$,
	\item[] $|\{x\in S_4\;|\; l(x)=5\}|=3$ and
	\item[] $|\{x\in S_4\;|\; l(x)=6\}|=1$.
\end{enumerate}
Thus the Poincar\'e polynomial of the flag variety $\mathscr{F}_4(\mathbb{C})$ is
\[
P(t)= 1 + 3t^2 + 5t^4 + 6t^6 + 5t^8 + 3t^{10} + t^{12}.
\]
Similar formulas hold for the Grassmanians and other projective homogeneous spaces.

\subsection{$h$-polynomials} \label{hpoly.sub}

Let $X$ be a projective torus embedding. Define the {\bf $h$-polynomial} of $X$ by
setting
\[
h_X(t)=\sum_{Z<X}(t-1)^{dim(Z)}
\]
where $Z<X$ means that $Z$ is a $T$-orbit of $X$. In case $X$ is rationally smooth
(\cite{Brion1,Dan,Mc}) $h_X(t)$ can be used to calculate the Betti numbers of $X$.

For example consider the {\bf Eulerian Polynomials}. Let $S_n$ denote the symmetric group, and
let $\sigma = (p_1p_2...p_n)\in S_n$ (so that $\sigma(i)=p_i$). Define
\[
A(\sigma)=\{i\;|\; 1\leq i\leq n-1\;\text{and}\; p_i\leq p_{i+1}\},
\]
the {\bf ascent set} of $\sigma$. For example, in $S_4$,
\begin{enumerate}
\item[] $A(1234)=\{1,2,3\}$
\item[] $A(1324)=\{1,3\}$
\item[] $A(4321)=\phi$
\end{enumerate}
Define
\[
a(\sigma)=|A(\sigma)|,
\]
and
\[
E_n(t)=\sum_{\sigma\in S_n}t^{a(\sigma)}.
\]
$E_n$ is the Eulerian polynomial for $S_n$. This is the $h$-polynomial of
the torus embedding $X_{n-1}$ associated (via the normal fan construction) with the
much-studied {\bf permutahedron} $\mathscr{P}_{n-1}$. $\mathscr{P}_{n-1}$ can be defined
(ambiguously) as the convex hull of the $S_n$-orbit of a point in general position in
$\mathbb{Q}^n$. To prove that $E_n(t)$ is the $h$-polynomial of $X_{n-1}$ one can use the method
of Bialynicki-Birula \cite{BB} to obtain a cellular decomposition of $X_{n-1}$ with one cell of
dimension $a(\sigma)$ for each $\sigma\in S_n$.
The face lattice $\mathscr{F}$ of $\mathscr{P}_{n-1}$ can be identified with
\[
\bigsqcup_{\sigma\in S_n}\{(\sigma,A)\;|\; A\subseteq A(\sigma)\}.
\]
Thus the $h$-polynomial of $X_{n-1}$ is
\[
h_n(t)=\sum_{\sigma\in S_n,\ A\subseteq A(\sigma)}(t-1)^{|A|}.
\]
But
\[
t^{a(\sigma)}=((t-1)+1)^{a(\sigma)}=\sum_{A\subseteq A(\sigma)}(t-1)^{|A|}.
\]
Thus $E_n(t)=h_n(t)$.

The following illustration shows us the ascent structure for the case $S_4$.
It is a useful, but elementary exercise to calculate this polynomial by looking at the picture.
One obtains
\[
E_4(t)=1 + 11t + 11t^2 + t^3.
\]
Notice that the set of edges of the polytope $\mathscr{P}_{n-1}$ is canonically identified with
\[
\bigsqcup_{\sigma\in S_n}\{(\sigma,\alpha)\;|\; \alpha\in A(\sigma)\}.
\]
This set is also identified with the set of one-dimensional $T$-orbits on $X_{n-1}$.

\vskip-.75in
\epsfig{file=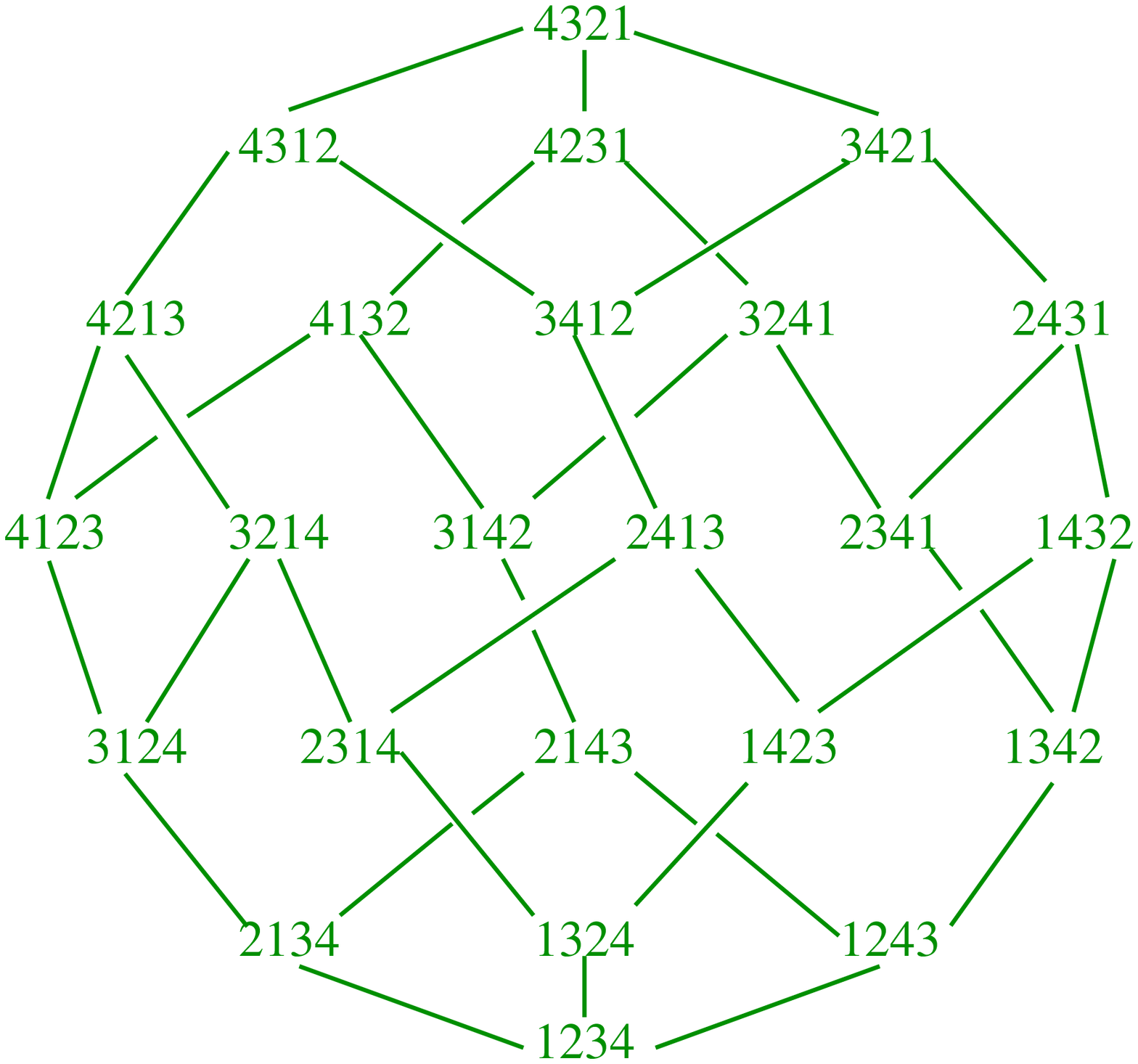, height=5.00in, bbllx=-40, bblly=100, bburx=612, bbury=792}

\subsection{$H$-polynomials}

The $H$-polynomial is the obvious synthesis of two extremes,
the $h$-polynomial of a torus embedding, and the length polynomial of a Weyl
group. In the former case one collects summands of the form $(t-1)^a$
(coming from the finite number of orbits of a torus group) while in the latter case one
collects summands of the form $t^b$ (coming from the finite number of orbits of a unipotent
group). But in each case the corresponding polynomial yields the desired coefficients. The
common theme here is that, in both cases, we are summing over a finite number of
$H$-orbits for the appropriate solvable group $H$. In more general cases, like
$G\times G$-embeddings of $G$ with the $B\times B$-action, there are a finite number of
$B\times B$-orbits, and each one is composed of a unipotent part and a diagonalizable part.
In this situation, we need to collect summands of the
form $(t-1)^at^b$ for the appropriate integers $a$ and $b$. Indeed, for each $B\times B$-orbit
$BxB$, define
\[
a(x)=rank(B\times B)-rank(B\times B)_x
\]
and
\[
b(x)=dim(UxU).
\]
Here $(B\times B)_x=\{(g,h)\in B\times B\;|\; gxh^{-1}=x\}$.
Thus we make the following fundamental definition.

\begin{definition} \label{hpolynomial.def}
Let $\rho : G\to End(V)$ be an irreducible representation and let
\[
X=X_\rho
\]
be as above. The {\bf $H$-polynomial} of $X$ is defined to be
\[
H_X(t)=\sum_{x\in\mathscr{R}}(t-1)^{a(x)}t^{b(x)}.
\]
where $\mathscr{R}$ is a set of representatives for the $B\times B$-orbits of $X$.
\end{definition}

\begin{remark}
This $H$-polynomial is not the correct tool for investigating varieties with singularities
that are not rationally smooth. In the case of Schubert varieties, and Kazhdan-Lusztig theory,
the correct formulation incorporates a ``correction factor" ({\em aka} the {\bf $KL$-polynomial})
that takes into account local intersection cohomology groups. See Theorem 6.2.10 of
\cite{BL}.

The authors of \cite{BJ} calculate the Poincar\'e polynomial, for intersection cohomology,
of a large class of  $G\times G$-embeddings using the stratification by $G\times G$-orbits.
In case the singularities of $\mathbb{P}(M)$ are rationally smooth, the polynomial $IP_X(t)$
of \cite{BJ} agrees with the polynomial $H_X(t)$ (where $X=\mathbb{P}(M)$) defined above in
Definition \ref{hpolynomial.def}. See Theorem \ref{hpolyequalspoincarepoly.thm}.
However, in the absence of rationally smooth singularities, these local intersection cohomology
groups may not be so well adapted to cellular decompositions.

See \cite{Spring} for a detailed study of the intersection cohomology of $B\times B$-orbit closures
in the case of the ``wonderful embedding" (i.e. When $M$ is $\mathscr{J}$-irreducible of type
$J=\emptyset$ and $X=(M\setminus\{0\})/\mathbb{C}^*$. Even though $M\setminus\{0\}$ is rationally
smooth in this case, the same may not true for the closure in $M\setminus\{0\}$, of a $B\times B$-orbit).
\end{remark}

\begin{example}   \label{easyhpoly.eg}
Let $M=M_2(K)$. Then
\[ \mathscr{R}=\left\{\left(\begin{array}{ll}
1 & 0\\
0 & 1
\end{array}\right) , \left(\begin{array}{ll}
0 & 1\\
1 & 0
\end{array}\right) , \left(\begin{array}{ll}
1 & 0\\
0 & 0
\end{array}\right) , \left(\begin{array}{ll}
0 & 0\\
0 & 1
\end{array}\right) , \left(\begin{array}{ll}
0 & 1\\
0 & 0
\end{array}\right) , \left(\begin{array}{ll}
0 & 0\\
1 & 0
\end{array}\right) , \left(\begin{array}{ll}
0 & 0\\
0 & 0
\end{array}\right)\right\}\; . \]
For simplicity we write it, in order, as
$\mathscr{R}=\{1,s,e,f,n,m,0\}$. It is easy to calculate $a(x)$ and $b(x)$
for each element $x\in\mathscr{R}$. For example, $a(m)=1$ and $b(m)=2$. Thus,
if $X=M_2(\mathbb{C})$,
\[
H_X(t)=(t-1)^2t^1+(t-1)^2t^2+(t-1)^1t^1+(t-1)^1t^1+(t-1)^1t^0+(t-1)^1t^2+(t-1)^0t^0
\]
Hence, after an elementary calculation, we obtain
\[
H_X(t)=t^4.
\]
This should not be surprising since, over $\mathbb{F}_q$,
\[
|BxB|=(q-1)^{a(x)}q^{b(x)}.
\]
while
\[
M_2(\mathbb{C})=\bigsqcup_{x\in\mathscr{R}}BxB.
\]
Here, $B$ is the $2\times 2$ upper-triangular group.
\end{example}

\begin{example}  \label{wonderfulpgl3.ex}
Let $G_0=PGL_3(\mathbb{C})$, and let $\rho : G_0\to End(V)$
be any irreducible representation whose highest weight is in general
position. Then the $H$-polynomial of $X_\rho$ is given by
\[
H(t)=\left[1+2t^2+2t^3+t^5\right]\left[1+2t+2t^2+t^3\right]
\]
See \S\ref{wonderful.sub} and Theorem \ref{thebiganswer.thm} for some related
 general formulas.
\end{example}

\section{Rationally Smooth Embeddings}  \label{ratsm.sec}

Let $X$ be a complex, algebraic variety of dimension $n$.
Then $X$ is {\bf rationally smooth} at $x\in X$ if there is a neighborhood
$U$ of $x$ in the complex topology such that, for any $y\in U$,
\[
H^m(X,X\setminus\{y\})=(0)
\]
for $m\neq 2n$ and
\[
H^{2n}(X,X\setminus\{y\})=\mathbb{Q}.
\]
Here $H^*(X)$ denotes the cohomology of $X$ with rational coefficients.
Danilov, in \cite{Dan}, characterized the rationally smooth toric varieties
in combinatorial terms.

The purpose of this section is to identify the class of rationally smooth
embeddings of the form $X_\rho$. We then find a useful description of the
$H$-polynomial of these embeddings. See Theorem\ref{thebiganswer.thm}.

\subsection{Rationally Smooth Monoids} \label{ratsmmon.sub}

Let $M$ and $N$ be reductive monoids. We write $M\sim_0 N$ if there is a reductive
monoid $L$ and finite dominant morphisms $L\to M$ and $L\to N$ of algebraic monoids.
One can check that this is indeed an equivalence relation.

In the following theorem we write $\mathbb{C}^n$ for the reductive monoid with
multiplication
\[
m : \mathbb{C}^n\times \mathbb{C}^n \to\mathbb{C}^n
\]
defined by $m((t_1,...t_n),(x_1,...x_n)) = (t_1x_1,...t_nx_n)$.

\begin{theorem}  \label{mainratsm.thm}
Let $M$ be a reductive monoid with zero. The following are equivalent.
\begin{enumerate}
\item $M$ is rationally smooth.
\item $M\sim_0 \Pi_iM_{n_i}(\mathbb{C})$.
\item $\overline{T}\sim_0 \mathbb{C}^m$.
\end{enumerate}
\end{theorem}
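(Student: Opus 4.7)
My plan is to prove the cycle $(2)\Rightarrow(1)\Rightarrow(3)\Rightarrow(2)$, since each arrow has a different flavor: the first is a direct transfer argument, the second reduces to the toric case plus Danilov, and the third requires the structure theory of reductive monoids.

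For $(2)\Rightarrow(1)$ I would argue that each factor $M_{n_i}(\mathbb{C})$ is just affine space, hence smooth, so any product $L=\prod_i M_{n_i}(\mathbb{C})$ is smooth and in particular rationally smooth. The key lemma is that rational smoothness descends along a finite dominant morphism $f:L\to M$ of irreducible varieties: since $f$ is finite and surjective, the trace map makes $H^*(M,\mathbb{Q})$ a direct summand of $H^*(L,\mathbb{Q})$, and the same is true locally for the relative cohomology groups $H^m(M,M\setminus\{y\})$ that define rational smoothness. Hence $M$ inherits rational smoothness from $L$.

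For $(1)\Rightarrow(3)$, the first step is to deduce that the torus embedding $\overline{T}\subseteq M$ is rationally smooth whenever $M$ is. This is where I would invoke the $T$-fixed-point description of local cohomology at a torus fixed point together with the fact that every $T$-fixed point of $M$ lies in $\overline{T}$, so the local complex at such a point can be read off from $\overline{T}$. With $\overline{T}$ rationally smooth, Danilov's criterion \cite{Dan} tells us the associated fan is simplicial, and a simplicial toric variety admits a finite dominant map from the affine space $\mathbb{C}^m$ corresponding to a subdivision by basic cones (take the quotient by a finite abelian group acting diagonally), giving $\overline{T}\sim_0 \mathbb{C}^m$.

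The main obstacle is $(3)\Rightarrow(2)$: starting only from a combinatorial condition on $\overline{T}$, one must produce a global finite dominant map from a product of matrix monoids to something equivalent to $M$. The plan is to use the Putcha--Renner classification, decomposing $M$ (up to $\sim_0$) into its $\mathscr{J}$-irreducible components and reducing to the case where $M$ itself is $\mathscr{J}$-irreducible. For such $M$, the polytope of $\overline{T}$ is determined by the highest weight of the defining representation together with the root system of the unit group $G$; simpliciality of this polytope forces very strong constraints on the pair (root system, weight), and one checks case-by-case that these constraints single out exactly the monoids arising as $\prod_i M_{n_i}(\mathbb{C})$ up to finite dominant covers. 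Here I would lean heavily on the cross-section lattice and the idempotent structure of $M$ to match up the two sides, constructing an explicit finite dominant morphism $\prod_i M_{n_i}(\mathbb{C})\to L\to M$ through a common cover $L$. The combinatorics of ensuring the two finite dominant morphisms into $L$ are compatible with the monoid structures is where I expect the real work to lie.
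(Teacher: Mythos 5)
The paper does not actually prove this theorem---it is quoted from \cite{R8} with the proof deferred there---so your proposal has to stand on its own, and it does not yet. The decisive gap is in $(3)\Rightarrow(2)$, which you correctly identify as the hard arrow but then sketch using the wrong combinatorial condition. You propose to exploit ``simpliciality of the polytope'' of $\overline{T}$, i.e.\ that $\mathscr{P}_\lambda=Conv(W\cdot\lambda)$ is a simple polytope. But that is the \emph{combinatorially smooth} condition of Theorem \ref{almostsmooth.thm}, which characterizes rational smoothness of $\mathbb{P}(M)=(M\setminus\{0\})/\mathbb{C}^*$, i.e.\ of $M$ away from the vertex $0$, and it is satisfied by many $J$ in every Dynkin type ($B_n$, $D_n$, $E_8$, $G_2$, \dots; see the list in Section \ref{descentsys.sec}). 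Hypothesis (3) is much stronger: it says the \emph{cone} of the affine toric variety $\overline{T}$, vertex included, is simplicial up to finite cover, equivalently $|E_1(\overline{T})|=\dim T$ and the rank-one idempotents span the cocharacter lattice up to finite index. It is this condition, combined with the $W$-action permuting $E_1(\overline{T})$ and its interaction with the roots, that forces the type-$A$ product conclusion; running your case-by-case check against polytope-simplicity would instead return the long list of combinatorially smooth $J$'s and flatly contradict (2). Your proposed reduction to $\mathscr{J}$-irreducible monoids is also not available: a reductive monoid with zero whose cross-section lattice has several minimal nonzero elements need not be $\sim_0$-equivalent to a product of $\mathscr{J}$-irreducible pieces, so the case analysis cannot be restricted to a single highest weight.

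The two other arrows also have unpatched holes. In $(2)\Rightarrow(1)$ you treat $\prod_i M_{n_i}(\mathbb{C})$ as if it mapped onto $M$, but $\sim_0$ only supplies a third monoid $L$ with finite dominant morphisms $L\to M$ and $L\to\prod_i M_{n_i}(\mathbb{C})$; before any transfer argument you must show $L$ itself is rationally smooth, and a finite dominant cover of a smooth variety need not be (e.g.\ the cone over a plane elliptic curve projects finitely and dominantly onto $\mathbb{C}^2$). What rescues the step is that finite dominant morphisms of \emph{reductive monoids} restrict to isogenies of the unit groups and are essentially quotients by finite central subgroups, for which the local relative cohomology transfers in both directions; this special structure has to be invoked explicitly, as the statement is false for arbitrary finite dominant morphisms. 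Finally, in $(1)\Rightarrow(3)$ the assertion that $\overline{T}$ inherits rational smoothness should be proved by applying Brion's fixed-point theorem \cite{Brion1} to the conjugation action of $T$ on $M$, using $C_M(T)=\overline{T}$ so that $\overline{T}=M^T$; ``the local complex can be read off from $\overline{T}$'' is not an argument. The subsequent use of Danilov \cite{Dan} and the presentation of a simplicial affine toric variety as $\mathbb{C}^m/\Gamma$ for a finite diagonal group $\Gamma$ is fine and does give $\overline{T}\sim_0\mathbb{C}^m$.
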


Notice that if $M\sim_0 N$ then $M$ is rationally smooth if and only if $N$
is rationally smooth.

See \cite{R8} for a proof of Theorem \ref{mainratsm.thm}. See also \cite{Brion1} for a
systematic discussion of rationally smooth varieties with torus action.

\begin{corollary} \label{mainratsm.cor}
Let $M$ be a rationally smooth reductive monoid and let $e\in E(M)$. Then
$eM$ is a rationally smooth algebraic variety.
\end{corollary}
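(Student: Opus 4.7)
The plan is to use Theorem \ref{mainratsm.thm} to reduce to the model case of a product of full matrix monoids, where the idempotent slice is manifestly an affine space.

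By Theorem \ref{mainratsm.thm} there is a reductive monoid $L$ together with finite dominant morphisms $\phi : L \to M$ and $\psi : L \to N$, where $N = \prod_i M_{n_i}(\mathbb{C})$. After replacing $e$ by a $G$-conjugate I may assume $e \in \overline{T}_M$. Since idempotents in a toric monoid correspond to faces of its cone, and the restriction $\phi|_{\overline{T}_L} : \overline{T}_L \to \overline{T}_M$ is a finite dominant morphism of affine toric varieties (i.e.\ a finite-index inclusion of lattices on the same cone), I can lift $e$ to an idempotent $f \in E(\overline{T}_L)$ with $\phi(f) = e$. Set $e' := \psi(f) \in E(N)$. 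Since $\phi$ is a surjective monoid morphism with $\phi(f) = e$, one has $\phi(fL) = eM$ and the restriction $\phi|_{fL} : fL \to eM$ is finite and surjective; likewise $\psi|_{fL} : fL \to e'N$ is finite and surjective.

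Next I identify $e'N$ explicitly. Writing $e' = (e'_1, \dots, e'_k)$ with each $e'_i$ an idempotent of rank $r_i$ in $M_{n_i}(\mathbb{C})$, the left ideal $e'_i M_{n_i}(\mathbb{C})$ is the linear subspace of matrices whose columns lie in the image of $e'_i$, hence is isomorphic to $\mathbb{C}^{r_i n_i}$. So $e'N \cong \prod_i \mathbb{C}^{r_i n_i}$ is actually smooth, and in particular rationally smooth. The same principle that makes rational smoothness a $\sim_0$-invariant---invariance under finite dominant morphisms of normal varieties---then propagates rational smoothness from $e'N$ up to $fL$ and from $fL$ down to $eM$.

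The main obstacle is this last propagation. The $\sim_0$-equivalence is formulated for reductive monoids, whereas $fL$, $eM$, and $e'N$ are only left ideals in such monoids. To bridge this gap I would invoke the general topological fact that, for a finite Galois cover $X \to X/F$ of normal irreducible varieties with $F$ a finite group acting on $X$ and $\mathbb{Q}$-coefficients, $X$ is a rational homology manifold if and only if $X/F$ is. Passing to a Galois closure of $\phi$ and verifying that the lift $f$ is stable under the deck group---which is automatic because $f$ is chosen inside the torus-invariant piece $\overline{T}_L$---yields the required two-way transfer of rational smoothness and completes the argument.
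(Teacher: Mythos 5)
Your reduction to the matrix-monoid model via Theorem \ref{mainratsm.thm} is a genuinely different route from the paper's, and most of the bookkeeping (lifting $e$ to $f\in E(\overline{T}_L)$, the surjectivity and finiteness of $\phi|_{fL}:fL\to eM$, and the identification $e'N\cong\prod_i\mathbb{C}^{r_in_i}$) is fine. But the step you yourself flag as the main obstacle is where the argument breaks: the ``general topological fact'' that for a finite Galois cover $X\to X/F$ of normal irreducible varieties, $X$ is a $\mathbb{Q}$-homology manifold \emph{if and only if} $X/F$ is, is false in the ascending direction. Only descent is true ($X$ rationally smooth $\Rightarrow$ $X/F$ rationally smooth, since $H^*(X/F;\mathbb{Q})\cong H^*(X;\mathbb{Q})^F$); the converse fails. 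For instance, the normal surface $X=\{z^2=\ell_1\cdots\ell_5\}\subseteq\mathbb{C}^3$ (five distinct linear forms in $x,y$) is a $\mathbb{Z}/2\mathbb{Z}$-cover of the smooth variety $\mathbb{C}^2$, yet its link at the origin is a Seifert fibration over a genus-two curve, hence has $b_1>0$, so $X$ is not rationally smooth at $0$. Your chain $e'N\;(\text{smooth})\Rightarrow fL\Rightarrow eM$ needs precisely this false ascending direction in its first arrow, so the argument does not close. Note also that the two-way invariance of rational smoothness under $\sim_0$ asserted after Theorem \ref{mainratsm.thm} is a statement about reductive monoids and rests on their special local/toric structure (Danilov's criterion depends only on the cone, not the lattice); it is not an instance of a general principle about finite covers that you could transplant to the left ideals $eM$, $fL$, $e'N$ without redoing that structure theory.

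For comparison, the paper's proof avoids $\sim_0$ entirely: choose a maximal torus $T$ with $e\in\overline{T}$ and a one-parameter subgroup $S\subseteq T$ with $E(\overline{S})=\{1,e\}$; then $eM=\{x\in M\mid sx=x\ \text{for all}\ s\in S\}$ is the fixed-point set of a torus action on the rationally smooth variety $M$, and Theorem 1.1 of \cite{Brion1} gives the conclusion immediately. If you want to salvage your approach, you would need to replace the Galois-cover transfer by an argument of this fixed-point type, or else prove directly (via the local structure theory of \cite{Brion2,R2}) that $fL\to e'N$ reflects rational smoothness in this particular situation.
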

\begin{proof}
Let $T$ be a maximal torus with $e\in\overline{T}$. Then there is a one-parameter
subgroup $S\subseteq T$ such that $E(\overline{S})=\{1,e\}$. One checks easily that
$eM=\{x\in M\;|\; sx=x\;\text{for all}\;s\in S\}.$ Thus, by Theorem 1.1 of \cite{Brion1},
$eM$ is rationally smooth.
\end{proof}

In \cite{R6} it is shown that any rationally smooth embedding of the form $X_\rho$
(see \S\ref{ratsmemb.sub} below) has a natural cell decomposition. Corollary \ref{mainratsm.cor}
above allows one to conclude that these cells are themselves rationally smooth. See Theorem 5.1
of \cite{R6}.

\subsection{Rationally Smooth Embeddings} \label{ratsmemb.sub}

Let $M$ be a normal, reductive monoid with unit group $G$ and zero element
$0\in M$. Let $\epsilon : \mathbb{C}^*\to G$ be a central 1-parameter subgroup
that converges to 0. Define
\[
\mathbb{P}_{\epsilon}(M) = (M\setminus\{0\})/\mathbb{C}^*.
\]
$\mathbb{P}_{\epsilon}(M)$ is a projective $G\times G$-embedding.
$G\times G$ acts on $\mathbb{P}_{\epsilon}(M)$ by
\[
(g,h,[x])\leadsto [gxh^{-1}].
\]
If $M$ is semisimple (so that $\epsilon$ is essentially unique)
we write $\mathbb{P}(M)$ for $\mathbb{P}_{\epsilon}(M)$. Notice also
that there is a representation $\rho$ of $G$
such that
\[
\mathbb{P}_{\epsilon}(M)=X_\rho.
\]

Recall that $E(M)=\{e\in M\;|\; e^2 = e\}$. For $e\in E(M)$ we define
\[
M_e = \overline{\{g\in G\;|\; ge=eg=e \}}.
\]
By the results of \cite{Brion2}, $M_e$ is an irreducible, normal reductive monoid
with unit group $G_e=\{g\in G\;|\; ge=eg=e \}$.

\begin{theorem} \label{ratsmemb.thm}
The following are equivalent.
\begin{enumerate}
	\item $\mathbb{P}_{\epsilon}(M)$ is rationally smooth.
	\item For any $e\in E(M)\setminus\{0\}$, $M_e$ is rationally
	smooth.
\end{enumerate}
\end{theorem}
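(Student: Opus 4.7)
The plan is to localize the rational smoothness check at the $T$-fixed points of $\mathbb{P}_\epsilon(M)$ and then identify the local model at each such point with a neighborhood of the zero idempotent of the corresponding monoid $M_e$. First, since $\mathbb{P}_\epsilon(M)$ is a projective $T$-variety, the criterion of \cite{Brion1} reduces rational smoothness to a pointwise condition at $T$-fixed points. A direct enumeration shows that these fixed points are precisely the classes $[e]$ with $e\in E(\overline{T})\setminus\{0\}$. Because $G\times G$ acts on $\mathbb{P}_\epsilon(M)$ and every $e\in E(M)\setminus\{0\}$ is $G\times G$-conjugate into $\overline{T}$, checking rational smoothness at all such $[e]$ is equivalent to checking it at each non-zero idempotent.

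Second, for each $e\in E(\overline{T})\setminus\{0\}$ I would construct a $T$-equivariant \'etale slice inside $M\setminus\{0\}$ to the $\epsilon(\mathbb{C}^*)$-orbit through $e$, and identify this slice with a $T$-stable open neighborhood of the zero element $e$ of $M_e$. The structural input is a Peirce-type decomposition of the ambient monoid at $e$, together with the fact from \cite{Brion2} that $M_e$ is itself a normal reductive monoid with unit group $G_e$ and zero element $e$. Under this identification, $\mathbb{P}_\epsilon(M)$ is rationally smooth at $[e]$ if and only if $M_e$ is rationally smooth at its zero. By Theorem \ref{mainratsm.thm} applied to $M_e$, this local condition is equivalent to $M_e$ being rationally smooth globally, since rational smoothness of a reductive monoid with zero is controlled by $\overline{T_e}\sim_0 \mathbb{C}^{m_e}$.

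Combining the two steps: $\mathbb{P}_\epsilon(M)$ is rationally smooth if and only if it is rationally smooth at each fixed point $[e]$, if and only if $M_e$ is rationally smooth at its zero for each $e\in E(\overline{T})\setminus\{0\}$, if and only if $M_e$ is rationally smooth for each $e\in E(M)\setminus\{0\}$. The main obstacle is the slice construction in Step 2, namely producing a $T$-equivariant \'etale model near $[e]$ whose germ is canonically that of $M_e$ at its zero. This is where the Luna-type local geometry of reductive monoids and the structural results of \cite{Brion2,R8} do the real work; once the slice is in place, the rest is essentially bookkeeping together with an appeal to Theorem \ref{mainratsm.thm}.
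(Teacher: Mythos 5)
Your overall architecture is the same as the paper's: the paper disposes of this theorem in two sentences by citing the local structure theorem of \cite{Brion2,R2}, according to which $M\setminus\{0\}$ is, near each $e\in E_1$, isomorphic to the product of $M_e$ with an affine space. So the ``\'etale slice'' you defer to Step 2 is not something to be constructed --- it is exactly the content of the cited result, and deferring it is legitimate. The genuine problem is your fixed-point enumeration. For the $T\times T$-action on $\mathbb{P}_{\epsilon}(M)$ the fixed points are \emph{not} the classes $[e]$ with $e\in E(\overline{T})\setminus\{0\}$: if $\dim(Te)\geq 2$ then $TeT$ has positive-dimensional image in $\mathbb{P}_{\epsilon}(M)$, so $[e]$ is not fixed. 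The actual fixed points are the classes $[\dot{u}e\dot{v}]$ with $e\in E_1(\overline{T})$ and $u,v\in W$, i.e.\ the rank-one elements of the Renner monoid. (If you instead mean the diagonal conjugation action of $T$, the fixed locus contains all of $(\overline{T}\setminus\{0\})/\mathbb{C}^*$ and is not finite, so Brion's reduction does not apply in the form you want.) As stated, ``a direct enumeration shows\dots'' is false.

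This error is repairable but leaves a real gap: after correcting it, your localization (together with $N(T)\times N(T)$-equivariance) only reaches the points $[e]$ with $e\in E_1$, which is also precisely where the cited local model is clean --- for rank-one $e$ the factor $eMe$ is a line absorbed by the $\mathbb{C}^*$-quotient, whereas for higher-rank $e$ the local model carries an extra nontrivial factor. You therefore still owe the equivalence between ``$M_e$ is rationally smooth for all $e\in E_1$'' and the theorem's actual condition ``$M_e$ is rationally smooth for all $e\in E(M)\setminus\{0\}$.'' This does follow, e.g.\ because for $e\leq f$ with $e\in E_1$ one has $G_f\subseteq G_e$ and $M_f=(M_e)_f$, so criterion (3) of Theorem \ref{mainratsm.thm} for $\overline{T}\cap M_e$ is inherited by $\overline{T}\cap M_f$; but none of this appears in your write-up. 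A smaller point: your appeal to Theorem \ref{mainratsm.thm} to pass from ``rationally smooth at the zero of $M_e$'' to ``rationally smooth everywhere'' is unnecessary --- the zero lies in the closure of every $G_e\times G_e$-orbit and rational smoothness is an open, orbit-constant condition, so the implication is immediate.
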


Indeed, by the results of \cite{Brion2,R2}, $M\setminus\{0\}$ is locally isomorphic to
the product of $M_e$ ($e\in E_1$) with some affine space. Thus
$\mathbb{P}_{\epsilon}(M)$ is rationally smooth if and only if so is
$M_e$ for all $e\in E_1$.

Notice, in particular, that the condition ``$\mathbb{P}_{\epsilon}(M)$ is rationally smooth"
is independent of how we choose $\epsilon$.

\subsection{H-polynomials and Poincar\'e Polynomials}

\begin{definition}  \label{poincarepoly.def}
Let $X$ be a complex algebraic variety. The {\bf Poincar\'e polynomial}
of $X$ is the polynomial $P_X(t)$ with the signed Betti numbers of
$X$ as coefficients.
\[
P_X(t) = \sum_{i\geq 0}(-1)^idim_\mathbb{Q}[H^i(X;\mathbb{Q})]t^i.
\]
\end{definition}

Clearly one can define a Poincar\'e polynomial for any reasonable cohomology theory.
In \cite{BJ} the authors compute the {\bf intersection cohomology}
Poincar\'e polynomial $IP_X(t)$ for a large class of $G\times G$-embeddings
$X$ of $G$. However it is known (\cite{Mc}) that $IP_X(t)=P_X(t)$ in case $X$ has rationally
smooth singularities.

\begin{theorem} \label{hpolyequalspoincarepoly.thm}
Let $M$ be a semisimple algebraic monoid such that $M\setminus\{0\}$ is rationally smooth.
Then
\[
H_X(t^2)=P_X(t)
\]
where $X=[M\setminus\{0\}]/K^*$.
\end{theorem}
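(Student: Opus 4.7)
The plan is to combine the rational smoothness of $X$, a cellular decomposition of $X$ into rational cells, and a combinatorial identity that rewrites the cell count in $H$-polynomial form. First I would observe that $X=[M\setminus\{0\}]/K^*$ is itself rationally smooth: the hypothesis that $M\setminus\{0\}$ is rationally smooth implies, via the local product description noted after Theorem \ref{ratsmemb.thm}, that each $M_e$ ($e\in E(M)\setminus\{0\}$) is rationally smooth, and hence Theorem \ref{ratsmemb.thm} gives that $\mathbb{P}(M)=X$ is rationally smooth. Rational smoothness has two key consequences: by \cite{Mc}, $IP_X(t)=P_X(t)$, and the cohomology of $X$ is concentrated in even degrees, so the coefficients of $P_X(t)$ are non-negative (one is really working with the absolute value of the signed Betti numbers in Definition \ref{poincarepoly.def}).

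Next I would invoke the rational cellular decomposition of $X$ from \cite{R6} (Theorem 5.1 in that paper, as cited just after Corollary \ref{mainratsm.cor}). This decomposition refines the $B\times B$-orbit stratification: each orbit $BxB$ is broken into finitely many rational cells, indexed by the face-lattice data of the rationally smooth toric piece associated to $x$ (the closure of the toric cross-section of the orbit in $\overline{T}$, which is rationally smooth by Theorem \ref{mainratsm.thm} and Corollary \ref{mainratsm.cor}) and fibered over the unipotent part $UxU\cong\mathbb{C}^{b(x)}$. Because each piece is a rational cell, it contributes exactly one copy of $\mathbb{Q}$ to $H^{2d}(X;\mathbb{Q})$ where $d$ is its complex dimension, so
\[
P_X(t)=\sum_{C}t^{2\dim_{\mathbb{C}}C},
\]
the sum running over all cells of the decomposition.

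It remains to verify the identity $\sum_C t^{2\dim_{\mathbb{C}}C}=H_X(t^2)$. This I would establish orbit by orbit. For fixed $x\in\mathscr{R}$, the cells lying over $BxB$ are organized by the face lattice of a rationally smooth toric embedding of rank $a(x)$, each face being fibered over $UxU\cong\mathbb{C}^{b(x)}$, so the sum over these cells equals the $h$-polynomial of that toric piece multiplied by $t^{2b(x)}$. By the same ``ascent-subset'' identity that implements $E_n(t)=h_n(t)$ in \S\ref{hpoly.sub}, this contribution is $(t^2-1)^{a(x)}t^{2b(x)}$, and summing over $x\in\mathscr{R}$ yields $H_X(t^2)$. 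An equivalent and conceptually cleaner route proceeds via $\mathbb{F}_q$-point counting: the decomposition $X=\bigsqcup_{x\in\mathscr{R}}BxB$ together with $|BxB(\mathbb{F}_q)|=(q-1)^{a(x)}q^{b(x)}$ (as noted in Example \ref{easyhpoly.eg}) gives $|X(\mathbb{F}_q)|=H_X(q)$, while the rational cellular paving of the pure Tate variety $X$ gives $|X(\mathbb{F}_q)|=\sum_C q^{\dim_{\mathbb{C}}C}=P_X(\sqrt{q})$; setting $q=t^2$ produces the desired identity.

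The main obstacle I anticipate is the orbit-by-orbit matching of cells to the face lattice of the toric cross-section: one must show that the cells of \cite{R6} lying over $BxB$ are indexed precisely by the faces of a rationally smooth rank-$a(x)$ toric embedding, and that a face of codimension $k$ contributes a cell of complex dimension $k+b(x)$. This is where the idempotent lattice of $M$ and the structural characterization of rationally smooth monoids (Theorem \ref{mainratsm.thm}) do the real work; once the face-lattice description is in place, the passage from $H$-polynomial to $P$-polynomial is the elementary binomial identity $t^{a(x)}=((t-1)+1)^{a(x)}$ applied in each toric factor.
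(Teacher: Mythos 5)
Your opening step (rational smoothness of $X$ via Theorem \ref{ratsmemb.thm}, then $IP_X(t)=P_X(t)$ by \cite{Mc}) agrees with the paper, but from there the two arguments diverge completely, and your version contains a genuine error. The paper never touches the rational cell decomposition in this proof: it applies Theorem 1.1 of \cite{BJ}, which expresses $IP_X(t)$ as a sum over $G\times G$-orbits weighted by local intersection cohomology terms $IP_{X,x}(t)=\tau_{\leq d_x-1}((1-t^2)IP_{\mathbb{P}(S_x)}(t))$, and then shows (using Lemma 1.3 of \cite{Brion1} and the results of \cite{Brion2}) that each slice $\mathbb{P}(S_x)$ is a rational homology projective space of dimension $d_x-1$, so every local term equals $1$. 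The formula then collapses to $IP_X(t)=\sum_x P_{(G\times G)x}(t)$, which is visibly $H_X(t^2)$ after grouping the $B\times B$-orbits inside each $G\times G$-orbit. This sidesteps entirely the question of whether a paving by rational cells computes Betti numbers.

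The error in your route is the direction of refinement: the cells of \cite{R6} do not subdivide the $B\times B$-orbits; each cell $C$ is a \emph{union} of $B\times B$-orbits, which is exactly how Corollary 5.2 of \cite{R6} is quoted in Section 4, namely $\sum_{r\in\mathscr{R}\cap C}(t-1)^{a(r)}t^{b(r)}=t^{\dim C}$. Consequently your orbit-by-orbit matching cannot be carried out: you claim the cells ``lying over'' a fixed orbit $BxB$ contribute $(t^2-1)^{a(x)}t^{2b(x)}$ to $\sum_C t^{2\dim_{\mathbb{C}}C}$, but for $a(x)\geq 1$ that polynomial has negative coefficients, while any sum of monomials $t^{2\dim C}$ has non-negative ones. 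The binomial identity $t^{a}=\sum_{A\subseteq A(\sigma)}(t-1)^{|A|}$ must be applied in the opposite direction, cell by cell, telescoping the contributions of the orbits contained in each cell into a single $t^{\dim C}$. Separately, even with the grouping corrected, your assertion that $P_X(t)=\sum_C t^{2\dim_{\mathbb{C}}C}$ for a decomposition into \emph{rational} (rather than honest affine) cells is the hard technical point: it is not automatic from rational smoothness, it is precisely what \cite{R6,R9} labor to establish, and your $\mathbb{F}_q$-counting variant presupposes the purity and evenness it is meant to deliver.
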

\begin{proof}
We give a sketch. See \cite{R9} for more details.
By our assumptions on $M$, $X$ is rationally smooth. Hence by the results of McCrory in \cite{Mc},
$IP_X(t)=P_X(t)$. Hence it suffices to show that $H(M)(t^2)=IP_X(t)$.

Let $x\in X$. Then from Theorem 1.1 of \cite{BJ}
\[
IP_{X,x}(t)=\tau_{\leq d_x-1}((1-t^2)IP_{\mathbb{P}(S_x)}(t))
\]
where $S_x$ is the appropriate slice and $d_x=dim(S_x)$. One checks, using
Lemma 1.3 of  \cite{Brion1} and the results of
\cite{Brion2} , that $\mathbb{P}(S_x)$ is a rational homology projective space of dimension $d_x-1$.
Thus $IP_{X,x}(t)=1$. Consequently, the formula for $IP_X(t)$ in Theorem 1.1 of \cite{BJ}
simplifies to a summation with summands of the form $P_{(G\times G)x}(t)$, as in (5.1.5) of \cite{BJ}.
Hence
\[
IP_X(t)=\sum_xP_{(G\times G)x}(t),
\]
where the sum is taken over a set of representatives of the $G\times G$-orbits of $X$. But this
is the same formula that one obtains by combining the $B\times B$-orbits into one summand for
each $G\times G$-orbit in the formula for $H(M)(t^2)$.
\end{proof}

\section{Cellular Decompositions and $H$-polynomials}

Assume that $X_\rho$ is rationally smooth. We then obtain a kind of cellular
decomposition for $X_\rho$. This decomposition is ultimately controlled
by the idempotent system of the associated monoid cone $M$ of $X_\rho$.
See Theorem \ref{monoidbb.thm} below. This allows us to find a
useful way to label these cells in terms of quantities from $W$ and
$E(\overline{T})$. We then find a dimension formula for each cell in
terms of its label. Finally we find an appropriate homological interpretation
of these cells so that we can compute the Betti numbers of $X_\rho$ in terms
of the $H$-polynomial. See also Theorem \ref{hpolyequalspoincarepoly.thm}.

\subsection{Monoid BB-decompositions}  \label{monoidbb.sub}

In this section we study the $BB$-cells of
$\mathbb{P}_{\epsilon}(M)=(M\backslash\{0\})/Z$
where $M$ is a reductive monoid with zero and
$\epsilon : Z\subset G$ is a central
one-parameter subgroup with $0\in\overline{Z}$.

Let $X$ be a normal, projective variety and assume that
$S=K^*$ acts on $X$. If $F_i\subset X^S$ is a connected component of the
fixed point set $X^S$ we define, following \cite{BB},
\[
X_i=\{x\in X\;|\; \underset{t\to 0}{lim}(t\cdot x)\in F_i\}.
\]
This decomposes $X$ as a disjoint union
\[
X=\bigsqcup_i X_i
\]
of locally closed subsets. Furthermore we have the $BB$-maps
\[
\pi_i : X_i\to F_i
\]
defined by $\pi_i(x)=\underset{t\to 0}{lim}(t\cdot x)$. See \cite{BB} for more
details. In that paper the author assumes that $X$ is nonsingular. Then he proves
his much-celebrated results (see Theorem 4.3 of \cite{BB}).
However many of his ideas can be extended to the nonsingular case. On the other hand,
the purpose of our discussion is to describe this {\bf $BB$-decomposition} in terms
of the system of idempotents of an appropriate algebraic monoid.

We start with the following results from \cite{R6}.
These theorems are stated in \cite{R6} for semisimple
monoids, but the proofs are easily modified to apply
to all reductive monoids with zero. Let $\overline{T}$ be the
closure in $M$ of a maximal torus, and let $E(\overline{T})$
be the set of idempotents of $\overline{T}$. Let
\[
E_1=E_1(\overline{T})=\{e\in\overline{T}\;|\; dim(Te)=1\},
\]
the set of rank-one idempotents of $\overline{T}$.\\

\begin{theorem}   \label{monoidbb.thm}
Let $M$ be a reductive monoid with unit
group $G$, zero element $0\in M$,
Borel subgroup $B\subseteq G$ and maximal torus $T\subseteq B$.
Let $\epsilon : Z\subseteq G$ be a 1-dimensional, connected, central
subgroup of $G$ such that $0\in\overline{Z}$.
Choose a one parameter subgroup $\lambda : \mathbb{C}^*\to T$ such that
\begin{enumerate}
\item $\underset{t\to 0}{lim}(tut^{-1})=1$ for all $u\in B_u$.
\item $\{x\in M\backslash\{0\}\;|\; \lambda(t)x\in Zx\;\text{for
all}\; t\in\mathbb{C}^*\}=\bigcup_{e\in E_1(\overline{T})}eM$.
\end{enumerate}
Let $X=(M\backslash\{0\})/Z=\mathbb{P}_\epsilon(M)$ and let
\[
X=\bigsqcup_{e\in E_1}X(e)
\]
be the $BB$-decomposition of $X$ with respect to $\lambda$.
Then, for all $e\in E_1(\overline{T})$,
\[
X(e)\subseteq\{[y]\in X\;|\; eBy=eBey\subseteq eG\}.
\]
Furthermore, the $BB$ projection,
\[
\pi_e : X(e)\to (eG)/\mathbb{C}^* \cong G/P_e
\]
is given by $\pi_e([y])=[ey]$.
\end{theorem}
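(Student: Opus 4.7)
The plan is to combine the Bialynicki-Birula machinery with the idempotent analysis of the reductive monoid $M$, deploying the two hypotheses on $\lambda$ complementarily: condition (1) supplies $B$-equivariance (up to the $T$-factor) of the BB-limit, while condition (2) both identifies the $\lambda$-fixed components and forces the BB-limit of each class to have the shape $[ey]$ for some $e\in E_1$.

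First I would locate the fixed-point set: $[y]\in X$ is $\lambda$-fixed iff some representative satisfies $\lambda(s)y\in Zy$ for all $s$, which by condition (2) is equivalent to $y\in\bigsqcup_{e\in E_1}(eM\setminus\{0\})$. For each $e\in E_1$ the BB-flow reaches the homogeneous piece $F_e=(eG)/Z$, and via $g\mapsto[eg]$ this is canonically identified with $G/P_e$, where $P_e=\{g\in G\mid eg\in Ze\}$ is parabolic. Next I would establish the limit formula $\pi_e([y])=[ey]$. Embed $M\hookrightarrow\operatorname{End}(V)$ $T$-equivariantly and decompose $y\in M\setminus\{0\}$ along $\lambda$-weight spaces of $\operatorname{End}(V)$; the leading nonzero summand $y_0$ (of minimal $\lambda$-weight $m_0$), suitably normalized by $Z$, controls the projective limit, since
\[
\lambda(s)y \;=\; s^{m_0}\bigl(y_0+O(s)\bigr) \quad\text{gives}\quad [\lambda(s)y]\longrightarrow[y_0]\ \text{in}\ X.
\]
The class $[y_0]$ is $\lambda$-fixed, so by condition (2), $y_0\in eM$ for some $e\in E_1$; in fact $e$ is precisely the idempotent projecting onto the ambient weight subspace carrying $y_0$, which gives $y_0=ey$ up to a scalar in $Z$. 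Hence $\pi_e([y])=[ey]\in F_e$.

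For the containment $X(e)\subseteq\{[y]\mid eBy=eBey\subseteq eG\}$, I would exploit $B$-equivariance modulo $T$. For $b=tu\in B=TB_u$ and $[y]\in X(e)$, condition (1) gives $\lambda(s)u\lambda(s)^{-1}\to 1$, so
\[
[\lambda(s)\cdot by] \;=\; t\cdot\bigl[(\lambda(s)u\lambda(s)^{-1})\cdot\lambda(s)y\bigr] \;\longrightarrow\; [tey].
\]
Thus $X(e)$ is $B$-stable, $\pi_e([by])=[tey]$, and therefore $eby\in Z\cdot tey$. As $b$ ranges over $B$ the $T$-component $t$ sweeps out all of $T$, so $eBy=T\cdot ey$; rerunning the same computation with $y$ replaced by the fixed point $ey\in X(e)$ yields $eBey=T\cdot ey$ as well, whence $eBy=eBey$. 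Writing $ey=eg_0\in eG$ from the previous step, the common value $T\cdot ey=e(Tg_0)$ lies in $eG$, giving $eBy\subseteq eG$.

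The main obstacle is the limit formula $\pi_e([y])=[ey]$. Condition (2) is engineered precisely so that the projector onto the leading $(\lambda,Z)$-weight component of any nonzero $y\in M$ lands in $E_1$ rather than in the larger idempotent lattice $E(\overline T)$, but making this rigorous requires tracking how the weight decomposition of the ambient $\operatorname{End}(V)$ interacts with the idempotent structure of $\overline T$—this is the monoid-theoretic input carried out in \cite{R6}. Once this identification is in hand, the remaining $B$-equivariance computation and the containment are straightforward consequences of condition (1).
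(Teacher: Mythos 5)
The paper does not prove this theorem at all: it is imported verbatim from \cite{R6} (``We start with the following results from \cite{R6}\dots the proofs are easily modified to apply to all reductive monoids with zero''), so there is no internal proof to compare against. Your outline is the natural strategy and, as far as one can tell, the one used in \cite{R6}: identify $X^\lambda$ via condition (2), compute the BB-limit from the leading $\lambda$-weight component in $\operatorname{End}(V)$, and then get the containment and $B$-stability of $X(e)$ from condition (1) by writing $b=tu$ and conjugating $u$ across $\lambda(s)$. That last computation, and the deduction $eBy=T\cdot ey=eBey\subseteq eG$, are fine as written (modulo the standard fact $eM\setminus\{0\}=eG$ for $e$ of rank one, which you are implicitly using and should cite or prove).

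The one step that is not merely deferred but actually misstated is the identification of the limiting idempotent. You write that $e$ ``is precisely the idempotent projecting onto the ambient weight subspace carrying $y_0$,'' i.e.\ $e=p_{m_0}$, where $p_{m_0}\in\operatorname{End}(V)$ is the projector onto $\operatorname{Hom}(V,V_{m_0})$. But $p_{m_0}$ need not lie in $E_1(\overline{T})$, nor even have rank one in the monoid sense: the $\lambda$-weight space $V_{m_0}$ can meet several faces of the weight polytope, and nothing forces $p_{m_0}\in M$. What condition (2) actually gives you is that $[y_0]$ is $\lambda$-fixed, hence $y_0\in eM$ for a (unique) $e\in E_1(\overline{T})$, so $ey_0=y_0$. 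To conclude $\pi_e([y])=[ey]$ you still must show $ey=y_0$, i.e.\ that $e$ annihilates every non-leading component $y_m$, $m\neq m_0$. This does follow, but from the rank-one-ness of $e$ rather than from any identification with $p_{m_0}$: since $e\in\overline{T}$ commutes with $\lambda$, the weight decomposition of $ey$ is $\sum_m ey_m$ with $ey_m=ep_my$; because $e$ has rank one, $e$ can act nontrivially on at most one $\lambda$-weight space of $V$ (here one uses $\lambda(t)e\in Ze$, which is condition (2) applied to $e$ itself), and since $ey_{m_0}=y_{m_0}\neq 0$ that weight must be $m_0$, forcing $ey_m=0$ for $m\neq m_0$. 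Supplying this short argument closes the gap you flagged and makes the sketch a complete proof; without it, the phrase ``$e$ is the projector onto the weight subspace'' is the weak link, since taken literally it is false.
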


Theorem \ref{monoidbb.thm} says that, if we choose an appropriate one-parameter-subgroup
of $T$,  the $BB$-projections
can be calculated by using the rank-one idempotents of $\overline{T}$. Since we
assume that $\mathbb{P}_{\epsilon}(M)=X_\rho$ is rationally smooth, the fibre of each
$\pi_e$ is a certain ``rational cell" that has been computed in \cite{R6}. On the
other hand, $G/P_e$ has an easily quantified cell decomposition
\[
G/P_e = \sqcup A_w
\]
Thus, if we let $\pi_e^{-1}(A_w)=C_w$, we obtain the decomposition $X_e=\sqcup_w C_w$.
From Corollary 5.2 of \cite{R6} each cell $C$ has an $H$-polynomial of the form
\[
H(C)=\sum_{r\in\mathscr{R}\cap C}t^{a(r)}(t-1)^{b(r)}=t^{dim(C)}.
\]
Putting this all together, we can calculate the $H$-polynomial of $\mathbb{P}_\epsilon(M)$
by identifying the contribution of each $C$ and each $X(e)$.

To state the main theorem we first recall that if $(W,S)$ is a Weyl group and $J\subset S$
then $W^J$ is the set of minimal length representatives for the cosets of $W_J$ in $W$.
In particular, the canonical composition
\[
W^J\to W\to W/W_J
\]
is bijective.

\begin{theorem}  \label{thebiganswer.thm}
Let $M$ be a reductive monoid such that $\mathbb{P}_\epsilon(M)$ is rationally smooth, and let
$\mathscr{R}$ be the
monoid of $B\times B$-orbits of $M$. Write $G/P_e$ for $(eG)/\mathbb{C}^*$. Let $e_1\in\Lambda_1$
and let $we_1w^{-1}=e$, where $w\in W^J$.
$w_0\in W^J$ is the unique element of maximal length, and $J=\{s\in S\;|\; se_1=e_1s\}$.
Also, let $H(G/P_e)=\sum_{w\in W^J}t^{l(w)}$.
Finally, $\nu(e)$ is defined in Definition 5.4 of \cite{R6}.
\begin{enumerate}
\item  If we let $w(e)=w$ then
\[
H_{\mathbb{P}_\epsilon(M)}(t)=\sum_{e\in E_1}\left[t^{l(w_0)-l(w(e))+\nu(e)}H(G/P_e)\right].
\]
\item In case $P_e$ and $P_{e'}$ are conjugate for all $e,e'\in E_1$ the sum can be
rewritten as
\[
H_{\mathbb{P}_\epsilon(M)}(t)=\left[\sum_{e\in E_1}t^{l(w_0)-l(w(e))+\nu(e)}\right]H(G/P).
\]
where $P=P_e$.
\end{enumerate}
\end{theorem}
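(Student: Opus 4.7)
The plan is to leverage Theorem \ref{monoidbb.thm} to decompose $X = \mathbb{P}_\epsilon(M)$ into pieces indexed by $e \in E_1$, refine each piece by pulling back the Schubert decomposition of $G/P_e$, identify the resulting pieces as rational cells of a rationally smooth variety (using Corollary \ref{mainratsm.cor} and the rational smoothness hypothesis on $X$), and finally apply the rigidity result that a rational cell $C$ has $H$-polynomial exactly $t^{\dim C}$.

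\textbf{Step 1 (coarse decomposition).} Choose a one-parameter subgroup $\lambda$ satisfying conditions (1) and (2) of Theorem \ref{monoidbb.thm}, and apply that theorem to obtain
\[
X \;=\; \bigsqcup_{e\in E_1} X(e),
\]
together with BB-projections $\pi_e : X(e) \to G/P_e$.

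\textbf{Step 2 (refinement by Schubert cells).} The Bruhat decomposition $G/P_e = \bigsqcup_{w \in W^J} A_w$, with $A_w = BwP_e/P_e$ and $\dim A_w = l(w)$, pulls back under $\pi_e$ to a decomposition
\[
X(e) \;=\; \bigsqcup_{w\in W^J} C_{e,w}, \qquad C_{e,w} := \pi_e^{-1}(A_w).
\]
Combining the two steps gives a cellular decomposition of $X$ indexed by pairs $(e,w)$ with $e\in E_1$ and $w\in W^J$ (where $J$ depends on $e$).

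\textbf{Step 3 (rational cells and their $H$-polynomials).} Since $\mathbb{P}_\epsilon(M)$ is assumed rationally smooth, Theorem \ref{ratsmemb.thm} and Corollary \ref{mainratsm.cor} guarantee that $eM$ is rationally smooth for every $e \in E_1$, so the fiber of $\pi_e$ is a rational cell in the sense of \cite{R6}. Because $A_w$ is an ordinary affine cell and $\pi_e$ is a BB-projection, each $C_{e,w}$ is itself a rational cell. Corollary 5.2 of \cite{R6} then yields
\[
H(C_{e,w})(t) \;=\; t^{\dim C_{e,w}}.
\]
Summing the contributions from all $B\times B$-orbits grouped by the cells to which they belong gives
\[
H_X(t) \;=\; \sum_{e\in E_1}\sum_{w\in W^J} t^{\dim C_{e,w}}.
\]

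\textbf{Step 4 (dimension formula).} The main obstacle is to identify $\dim C_{e,w}$ in the required closed form. Since $\pi_e$ has constant fiber dimension $d(e)$ on $X(e)$, one has $\dim C_{e,w} = l(w) + d(e)$. Here the fiber-dimension formula $d(e) = l(w_0) - l(w(e)) + \nu(e)$ is the key technical input and must be extracted from the monoid-theoretic description of the BB-cells in \cite{R6}: the term $l(w_0) - l(w(e))$ accounts for the unipotent part of $(B\times B)x$ on the $W$-conjugate of a fixed base idempotent $e_1$, while $\nu(e)$ (Definition 5.4 of \cite{R6}) captures the toric contribution from the closure $\overline{T}$ inside $eM$. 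Granting this identification, the inner sum factors:
\[
\sum_{w\in W^J} t^{l(w)+d(e)} \;=\; t^{\,l(w_0)-l(w(e))+\nu(e)}\cdot H(G/P_e)(t),
\]
and summing over $e\in E_1$ yields part (1).

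\textbf{Step 5 (part (2)).} If all $P_e$ are $G$-conjugate then $H(G/P_e) = H(G/P)$ is independent of $e$, so it factors out of the sum in (1), producing the formula in (2). The principal difficulty is thus Step 4, namely verifying that the BB-fiber dimension equals $l(w_0) - l(w(e)) + \nu(e)$; the remainder is an assembly of results already available in \cite{R6,R8}.
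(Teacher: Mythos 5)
Your proposal matches the paper's own argument essentially step for step: the paper also decomposes $X=\bigsqcup_{e\in E_1}X(e)$ via Theorem \ref{monoidbb.thm}, refines by the Schubert cells of $G/P_e$ to get $C_w=\pi_e^{-1}(A_w)$, invokes Corollary 5.2 of \cite{R6} for $H(C)=t^{\dim C}$, and defers the fibre-dimension identity $\dim(\text{fibre of }\pi_e)=l(w_0)-l(w(e))+\nu(e)$ to Theorem 5.5 and Definition 5.4 of \cite{R6}, exactly as you do. The proposal is correct and takes the same approach, with the same (explicitly acknowledged) reliance on the external reference for the key dimension formula.
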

See Theorem 5.5 of \cite{R6}.

\begin{remark} \label{itsnu.rk}
$\nu(e)$ is the most difficult quantity to calculate in the above setup.
It contains a subtle contribution from the induced $BB$-decomposition of the
associated maximal torus. See Sections 4.2 and 5.1 of \cite{R6} for more details.
In some examples this calculation involves {\bf descent systems} \cite{R7}. See Section
\ref{descentsys.sec} for a discussion of these descent systems, and
see Section \ref{wonderful.sub} below (the wonderful embedding) for the motivating example.
See Theorem \ref{thejewel.thm} and Examples \ref{allbutone.ex}, \ref{allbuttwo.ex} and \ref{coolcube.ex}
for more illustration of how $\nu(e)$ is quantified in the case of a {\bf simple} embedding
of the form $X_\rho$.

Each summand in the formula for $H_{\mathbb{P}(M)}(t)$ (in Theorem \ref{thebiganswer.thm}) mirrors
the $BB$-projection
\[
\pi_e : X(e)\to G/P_e.
\]
In particular, the fibre of $\pi_e$ has dimension $l(w_0)-l(w(e))+\nu(e)$.
\end{remark}

\section{Descent Systems} \label{descentsys.sec}

As mentioned above in Remark \ref{itsnu.rk}, a major problem in calculating the $H$-polynomial
is finding a satisfying description of the quantity $\nu(e)$ of Theorem \ref{thebiganswer.thm}.
We are particularly interested in the situation where the embedding $X_\rho$ is obtained
from an irreducible representation $\rho=\rho_\lambda$, of type $J=\{s\in S\;|\; s(\lambda)=\lambda\}$,
of $G$.  We refer to this embedding as $\mathbb{P}(J)$.
This is well-defined since $\mathbb{P}(J)$ depends only on $J$.
This leads us to the notion of a {\bf descent system} \cite{R7}. This descent system serves as an
effective combinatorial substitute for the infinitesimal part of Bialynicki-Birula's method \cite{BB}.

\begin{definition}\label{descentset.def}
Let $(W,S)$ be a Weyl group and let $J\subseteq S$ be a proper subset.
Define
\[
S^J=(W_J(S\setminus J)W_J)\cap W^J.
\]
We refer to $(W^J,S^J)$ as the {\bf descent system} associated with
$J\subseteq S$.
\end{definition}

\begin{proposition} \label{dichotomy.prop}
Let $u,v\in W^J$ be such that $u^{-1}v\in S^JW_J$. In
particular, $u\neq v$. Then either
$u<v$ or $v<u$ in the Bruhat order $<$ on $W^J$.
\end{proposition}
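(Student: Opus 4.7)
The plan is to reduce the comparability claim in $W^J$ to a single simple-reflection comparison in $W$, and then pull the conclusion back through the standard order-preserving canonical projection $W\to W^J$, $y\mapsto y^J$.

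Concretely, I would unpack the hypothesis as follows. Write $u^{-1}v = sw$ with $s\in S^J$ and $w\in W_J$, and factor $s = atb$ with $a,b\in W_J$ and $t\in S\setminus J$, as allowed by the definition $S^J = (W_J(S\setminus J)W_J)\cap W^J$. Setting $c := bw\in W_J$, we have $v = ua\cdot t\cdot c$. The elements $ua$ and $uat$ of $W$ then admit clean canonical decompositions relative to $W^J\cdot W_J$: namely $ua = u\cdot a$ with $u\in W^J$, $a\in W_J$ and $\ell(ua) = \ell(u)+\ell(a)$, so that $(ua)^J = u$; and $uat = v\cdot c^{-1}$ with $v\in W^J$, $c^{-1}\in W_J$ and $\ell(uat) = \ell(v)+\ell(c)$, so that $(uat)^J = v$.

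The key observation is that $uat$ and $ua$ differ by the simple reflection $t$, so exactly one of $ua\lessdot uat$ or $uat\lessdot ua$ holds in the Bruhat order on $W$. If $ua\lessdot uat$, then combined with $u\leq ua$ (which holds since $u$ is a prefix subword of a reduced expression for $ua$) we obtain $u\leq uat$ in $W$; applying the order-preserving projection $y\mapsto y^J$ yields $u = u^J\leq (uat)^J = v$ in $W^J$. If instead $uat\lessdot ua$, the same projection applied to this inequality gives $v = (uat)^J\leq (ua)^J = u$. Either way, $u$ and $v$ are Bruhat-comparable in $W^J$. Distinctness follows because $u^{-1}v = sw\in S^JW_J$ cannot be the identity: $sw = e$ would force $s = w^{-1}\in W^J\cap W_J = \{e\}$, whereas $e\notin W_J(S\setminus J)W_J$ (any factorization $e = a'tb'$ would put $t = (a')^{-1}(b')^{-1}\in W_J$, contradicting $t\in S\setminus J$).

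The only non-routine ingredient is the order-preserving property of the canonical projection $y\mapsto y^J$, a standard fact about Bruhat order in Coxeter groups that I would simply cite (e.g., from Bj\"orner--Brenti or Humphreys). The only real ``obstacle'' is the bookkeeping step of recognizing that the factorization $s = atb$ converts the hypothesis into a statement about two elements of $W$ that differ by a single simple reflection on the right — once that reduction is in place, everything else is mechanical.
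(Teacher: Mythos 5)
The paper does not actually prove Proposition \ref{dichotomy.prop}; it states it and defers to \cite{R7}, so there is no in-text argument to compare yours against. On its own merits, your proof is correct and self-contained. The bookkeeping step $v=(ua)\,t\,(bw)$ with $a,\,bw\in W_J$ and $t\in S\setminus J$ is exactly the right reduction: it produces two elements $ua$ and $uat$ of $W$ that differ by a single simple reflection (hence are Bruhat-comparable), whose canonical projections are $(ua)^J=u$ and $(uat)^J=v$ by the length-additive factorization $W=W^J\cdot W_J$. Applying the order-preserving map $y\mapsto y^J$ then gives $u\le v$ or $v\le u$. You correctly recognize that this projection is only \emph{weakly} order-preserving (it can collapse a comparable pair to equality), and you close that gap separately by showing $u^{-1}v\ne e$: if $sw=e$ then $s\in W^J\cap W_J=\{e\}$, while $e\notin W_J(S\setminus J)W_J$ since $W_J\cap S=J$. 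Every ingredient you cite --- the decomposition $\ell(ua)=\ell(u)+\ell(a)$ for $u\in W^J$, $a\in W_J$; the dichotomy $x<xt$ or $xt<x$ for $t\in S$; and monotonicity of $y\mapsto y^J$ --- is standard and correctly applied, so the argument stands as a complete proof of the proposition.
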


For a proof see \cite{R7}.

We let
\[
S^J_s=W_JsW_J\cap W^J.
\]

\begin{definition} \label{descentsets.def}
Let $w\in W^J$. Define
\begin{enumerate}
\item $D^J_s(w)=\{r\in S^J_s\;|\;wrc<w\;\text{for some}\;c\in W_J\}$, and
\item $A^J_s(w)=\{r\in S^J_s\;|\;w<wr\}$.
\end{enumerate}
We refer to $D^J(w)=\bigsqcup_{s\in S\setminus J}D^J_s(w)$ as the {\bf descent set} of $w$
relative to $J$, and $A^J(w)=\sqcup_{s\in S\setminus J}A^J_s(w)$ as the {\bf ascent set}
of $w$ relative to $J$.
\end{definition}

By Proposition~\ref{dichotomy.prop}, for any $w\in W^J$, $S^J=D^J(w)\sqcup
A^J(w)$.

\begin{remark}  \label{minlength.rk}
Notice that $wrc<w$ for some $c\in W_J$ if and only if $(wr)_0<w$, where
$(wr)_0\in wrW_J$ is the element of minimal length in $wrW_J$. It is
useful to illustrate the fact that $S^J=D^J(w)\sqcup A^J(w)$, for
each $w\in W^J$, by doing some specific calculations.
\end{remark}

\begin{definition}   \label{augmented.def}
For each $w\in W^J$ and each $s\in S\setminus J$ define
$\nu_s(w)=|A^J_s(w)|$. We refer to $(W^J,\leq,\{\nu_s\})$ as the
{\bf augmented poset} of $J$. For convenience we let
\[
\nu(w)=\sum_{s\in S\setminus J}\nu_s(w).
\]
\end{definition}
The point here is this. We use $(W^J,\leq,\{\nu_s\})$ to quantify how
the underlying torus embedding of $\mathbb{P}(J)$ is involved in calculating the
$H$-polynomial of $\mathbb{P}(J)$.

\begin{example} \label{thategfromwgspbppp.ex}
Let
\[
W=<s_1,...s_n>
\]
be the Weyl group of type $A_n$ (so that $W\cong S_{n+1}$),
and let
\[
J=\{s_3,...s_n\}\subseteq S.
\]

If $w\in W^J$ then $w=a_p$, $w=b_q$, or else  $w=a_pb_q$.
Here $a_p=s_p\cdots s_1$ ($1\leq p\leq n$) and $b_q=s_q\cdots s_2$ ($2\leq q\leq n$).
If we adopt the useful convention $a_0=1$ and $b_1=1$, then we can write
\[
W^J=\{a_pb_q\;|\;0\leq p\leq n\;\text{and}\;1\leq q\leq n\}
\]
with uniqueness of decomposition. Let $w=a_pb_q\in W^J$. After some tedious
calculation with braid relations and reflections, we obtain that
\begin{enumerate}
	\item[a)] $A^J_{s_1}(a_pb_q)=\{s_1\}$ if $p < q$.\\
	      $A^J_{s_1}(a_pb_q)=\emptyset$ if $q \leq p$.\\
	      Thus $\nu_{s_1}(a_pb_q)=1$ if $p < q$ and $\nu_{s_1}(a_pb_q)=0$ if $q \leq p$.
	\item[b)] $A^J_{s_2}(a_pb_q)=\{s_m\cdots s_n\;|\; m>q\}$ if $q<n$.\\
	      $A^J_{s_2}(a_pb_q)=\emptyset$ if $q=n$.\\
	      Thus $\nu_{s_2}(a_pb_q)=n-q$.
\end{enumerate}
It is interesting to compute the two-parameter ``Euler polynomial"
\[
H(t_1,t_2)=\sum_{w\in W^J}t_1^{\nu_1(w)}t_2^{\nu_2(w)}
\]
of the augmented
poset $(W^J,\leq,\{\nu_1,\nu_2\})$ (where we write $\nu_i$ for $\nu_{s_i}$). A
simple calculation yields
\[
H(t_1,t_2)=\sum_{k=1}^n[kt_1+(n+1-k)]t_2^{n-k}.
\]
\end{example}

Let $r : W\to GL(V)$ be the usual reflection representation of the
Weyl group $W$, where $V$ is a rational vector space. Along with this goes the
{\bf Weyl chamber} $\mathscr{C}\subseteq V$ and the corresponding set of {\bf simple
reflections} $S\subseteq W$. The Weyl group $W$ is generated by $S$, and $\mathscr{C}$ is
a fundamental domain for the action of $W$ on $V$.

Let $\lambda\in\mathscr{C}$. Consider the face lattice $\mathscr{F}_\lambda$
of the polytope
\[
\mathscr{P}_\lambda=Conv(W\cdot\lambda),
\]
the convex hull of $W\cdot\lambda$
in $V$. This lattice $\mathscr{F}_\lambda$ depends only on $W_\lambda=\{w\in W\;|\;
w(\lambda)=\lambda\}=W_{J}=\langle s\;|\;s\in J\rangle$, where $J=\{s\in S\;|\; s(\lambda)=\lambda\}$.
One can describe $\mathscr{F}_\lambda=\mathscr{F}_J$ explicitly in terms of $J\subseteq S$. See
\cite{PR1}.

\begin{definition}   \label{combsmooth.def}
We refer to $J$ as {\bf combinatorially smooth} if
$\mathscr{P}_\lambda$ is a simple polytope.
\end{definition}

It is important to characterize the very interesting condition of Definition \ref{combsmooth.def}.
If $J\subseteq S$ we let $\pi_0(J)$ denote the set of connected components of $J$.
To be more precise, let $s,t\in J$. Then $s$ and $t$ are in the same connected component
of $J$ if there exist $s_1,...,s_k\in J$ such that $ss_1\neq s_1s$, $s_1s_2\neq s_2s_1$,....,
$s_{k-1}s_k\neq s_ks_{k-1}$, and $s_kt\neq ts_k$.

The following theorem indicates exactly how to detect, combinatorially, the
condition of Definiton~\ref{combsmooth.def}.

\begin{theorem}     \label{almostsmooth.thm}
Let $\lambda\in\mathscr{C}$. The following are equivalent.
\begin{enumerate}
\item $\mathscr{P}_\lambda$ is a simple polytope.
\item There are exactly $|S|$ edges of $\mathscr{P}_\lambda$ meeting at $\lambda$.
\item $J=\{s\in S\;|\; s(\lambda)=\lambda\}$ has the properties
  \begin{enumerate}
    \item If $s\in S\backslash J$, and $J\not\subseteq C_W(s)$, then there is a unique
    $t\in J$ such that $st\neq ts$. If $C\in\pi_0(J)$ is the unique
    connected component of $J$ with $t\in C$ then $C\backslash\{t\}\subseteq C$
    is a setup of type $A_{l-1}\subseteq A_l$.
    \item For each $C\in\pi_0(J)$ there is a unique $s\in S\backslash J$
    such that $st\neq ts$ for some $t\in C$.
  \end{enumerate}
\end{enumerate}
\end{theorem}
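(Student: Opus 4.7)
The plan is to handle $(1) \Leftrightarrow (2)$ by $W$-transitivity on vertices, and then $(2) \Leftrightarrow (3)$ by an explicit edge count at $\lambda$, using the face-lattice description of $\mathscr{P}_\lambda$ from \cite{PR1}.

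A convex polytope is simple if and only if every vertex lies on exactly $d$ edges, where $d$ is its dimension. The reflection representation has $\dim V = |S|$, and under the standing hypotheses (so that $\lambda$ is not fixed by an entire simple factor of $W$) one has $\dim \mathscr{P}_\lambda = |S|$. Since $W$ acts transitively on the vertex set $W \cdot \lambda$, all vertices are equivalent, so simplicity is equivalent to the statement that exactly $|S|$ edges meet $\lambda$, which is $(2)$.

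For $(2) \Leftrightarrow (3)$, I would use the face-lattice description of \cite{PR1} to identify the edges at $\lambda$: each such edge has the form $[\lambda, w\lambda]$ for a unique $w \in W^J$ satisfying a minimality condition extracted from the face poset. I would group these edges by the simple reflection $s \in S \setminus J$ that serves as the left descent of $w$. If $J \subseteq C_W(s)$, then $w = s$ yields the single edge $[\lambda, s\lambda]$, contributing one edge per such $s$. If $J \not\subseteq C_W(s)$, the segment $[\lambda, s\lambda]$ is no longer a face, and the edges attributable to $s$ instead take the form $[\lambda, (s\,t_1 \cdots t_k)\lambda]$, with $t_i$ in a connected component $C \in \pi_0(J)$ adjacent to $s$. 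A direct analysis inside the parabolic $\langle s \rangle \cdot W_C$ shows that the number of such edges equals one exactly when there is a unique $t \in C$ with $st \neq ts$ and $C \setminus \{t\}$ sits inside $C$ as a type-$A_{l-1}$ subdiagram of a type-$A_l$ Dynkin diagram (so $t$ is an endpoint of a chain). Summing contributions over $s \in S \setminus J$, the total equals $|S|$ precisely when each $s$ contributes one edge (condition (3)(a)) and no component $C$ is attacked by two distinct $s$'s (condition (3)(b)).

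The main obstacle is the detailed verification in the second case that the $A_{l-1} \subseteq A_l$ criterion is what produces exactly one edge. I would handle this by localizing to the Dynkin subdiagram spanned by $\{s\} \cup C$ and classifying the short reduced expressions for $w \in W^J$ that give rise to faces, using braid and commutation relations in $W$ together with the Bruhat order on $W^J$; the type-$A$ restriction appears because only in that setting does the attached reflection $s$ produce a single new edge rather than several.
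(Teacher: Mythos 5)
The paper itself does not prove Theorem \ref{almostsmooth.thm}; it is a survey and defers the argument to \cite{R7} (see Corollary 3.5 there), which in turn rests on the face--lattice description of $\mathscr{P}_\lambda$ from \cite{PR1}. So I am judging your outline on its own terms. Your $(1)\Leftrightarrow(2)$ is correct and is the standard argument: granting your parenthetical hypothesis that $J$ contains no whole component of $S$, the polytope $\mathscr{P}_\lambda$ has dimension $|S|$, $W$ permutes its vertices transitively, and a $d$-dimensional polytope is simple iff some (hence every) vertex lies on exactly $d$ edges. Your plan for $(2)\Leftrightarrow(3)$ --- identify the edges at $\lambda$ from the face lattice and count them according to the element of $S\setminus J$ from which they arise --- is also the intended route.

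However, the count you propose is wrong, and the error breaks your final summation. The edges at $\lambda$ are the segments $[\lambda, r\lambda]$ with $r\in S^J$, and the relevant grouping is $S^J=\bigsqcup_{s\in S\setminus J}S^J_s$ with $S^J_s=W_JsW_J\cap W^J$, i.e.\ by the double coset containing $r$ (not by the left descent of $r$: for $r=s_i\cdots s_1$ the unique left descent is $s_i$, which lies in $J$). Under condition (3) the block $S^J_s$ has $|C_s|+1=\delta(s)$ elements, not one. Example \ref{allbutone.ex} already refutes your claim: for $A_n$ with $J=\{s_2,\dots,s_n\}$ the polytope is the $n$-simplex, there are $n$ edges at $\lambda$, and \emph{all} of them are attributable to the single element $s_1\in S\setminus J$, via $S^J_{s_1}=\{s_1,\,s_2s_1,\,\dots,\,s_n\cdots s_1\}$. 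With your accounting the total would be $|S\setminus J|$, which is strictly smaller than $|S|$ whenever $J\neq\emptyset$, so ``each $s$ contributes one edge'' cannot be the criterion. The correct bookkeeping is: condition (3)(a) (the $A_{l-1}\subseteq A_l$ requirement) is exactly what forces $|S^J_s|=|C_s|+1$ rather than something larger (for $B_3$ with $J=\{s_2,s_3\}$ one gets $|S^J_{s_1}|=4>|C|+1=3$, and $\mathscr{P}_\lambda$ is the non-simple octahedron), while condition (3)(b) is exactly what makes the components $C_s$ partition $J$ with none claimed twice (for $A_3$ with $J=\{s_2\}$ both $s_1$ and $s_3$ claim $\{s_2\}$, the count is $2+2=4>3$, and $\mathscr{P}_\lambda$ is the non-simple cuboctahedron). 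Only then does the sum telescope: $|S^J|=\sum_{s\in S\setminus J}\left(|C_s|+1\right)=|S\setminus J|+|J|=|S|$. Repairing your argument therefore means replacing the ``one edge per $s$'' claim by the computation of $|S^J_s|$ inside the parabolic subgroup generated by $\{s\}\cup C_s$; that computation is the substantive content of the proof in \cite{R7}.
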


One can list all possible subsets $J\subseteq S$ that are combinatorially smooth.
We do this according to the type of the underlying simple group.
The numbering of the elements of $S$ is as follows. For types $A_n, B_n, C_n, F_4,$ and
$G_2$ it is the usual numbering. In these cases the end nodes are $s_1$ and $s_n$. For type
$E_6$ the end nodes are $s_1,s_5$ and $s_6$ with $s_3s_6\neq s_6s_3$. For type
$E_7$ the end nodes are $s_1,s_6$ and $s_7$ with $s_4s_7\neq s_7s_4$. For type
$E_8$ the end nodes are $s_1,s_7$ and $s_8$ with $s_5s_8\neq s_8s_5$.
In each case  of type $E_n$, the nodes corresponding to $s_1, s_2,...,s_{n-1}$
determine the unique subdiagram of type $A_{n-1}$. For type $D_n$ the end
nodes are $s_1,s_{n-1}$ and $s_n$. The two subdiagrams of $D_n$, of type $A_{n-1}$,
correspond to the subsets $\{s_1, s_2,...,s_{n-2},s_{n-1}\}$ and
$\{s_1, s_2,...,s_{n-2},s_n\}$ of $S$.

\begin{enumerate}
  \item $A_1$.
  \begin{enumerate}
	\item $J=\phi$.
  \end{enumerate}
        $A_n$, $n\geq 2$. Let $S=\{s_1,...,s_n\}$.
\begin{enumerate}
  \item $J=\phi$.
	\item $J=\{s_1,...,s_i\}$, $1\leq i<n$.
	\item $J=\{s_j,...,s_n\}$, $1<j\leq n$.
	\item $J=\{s_1,...,s_i,s_j,...s_n\}$, $1\leq i$, $i\leq j-3$ and $j\leq n$.
\end{enumerate}
	\item $B_2$.
	\begin{enumerate}
	\item $J=\phi$.
	\item $J=\{s_1\}$.
	\item $J=\{s_2\}$.
\end{enumerate}
	      $B_n$, $n\geq 3$. Let $S=\{s_1,...,s_n\}$, $\alpha_n$ short.
\begin{enumerate}
  \item $J=\phi$.
	\item $J=\{s_1,...,s_i\}$, $1\leq i<n$.
	\item $J=\{s_n\}$.
	\item $J=\{s_1,...,s_i,s_n\}$, $1\leq i$ and $i\leq n-3$.
\end{enumerate}
   \item $C_n$, $n\geq 3$. Let $S=\{s_1,...,s_n\}$, $\alpha_n$ long.
\begin{enumerate}
  \item $J=\phi$.
	\item $J=\{s_1,...,s_i\}$, $1\leq i<n$.
	\item $J=\{s_n\}$.
	\item $J=\{s_1,...,s_i,s_n\}$, $1\leq i$ and $i\leq n-3$.
\end{enumerate}
	\item $D_n$, $n\geq 4$. Let $S=\{s_1,...s_{n-2},s_{n-1},s_n\}$.
\begin{enumerate}
	\item $J=\phi$.
	\item $J=\{s_1,...,s_i\}$, $1\leq i\leq n-3$.
	\item $J=\{s_{n-1}\}$.
	\item $J=\{s_n\}$.
	\item $J=\{s_1,...,s_i,s_{n-1}\}$, $1\leq i\leq n-4$.
	\item $J=\{s_1,...,s_i,s_n\}$, $1\leq i\leq n-4$.
\end{enumerate}
	\item $E_6$. Let $S=\{s_1,s_2,s_3,s_4,s_5,s_6\}$.
\begin{enumerate}
	\item $J=\phi$.
	\item $J=\{s_1\}$ or $\{s_1,s_2\}$.
	\item $J=\{s_5\}$ or $\{s_4,s_5\}$.
	\item $J=\{s_6\}$.
  \item $J=\{s_1,s_5\},\{s_1,s_2,s_5\}$ or $\{s_1,s_4,s_5\}$.
  \item $J=\{s_1,s_6\}$.
  \item $J=\{s_5,s_6\}$
  \item $J=\{s_1,s_5,s_6\}$.
\end{enumerate}
	\item $E_7$. Let $S=\{s_1,s_2,s_3,s_4,s_5,s_6,s_7\}$.
\begin{enumerate}
  \item $J=\phi$.
	\item $J=\{s_1\}, \{s_1,s_2\}$ or $\{s_1,s_2,s_3\}$.
	\item $J=\{s_6\}$ or $\{s_5,s_6\}$.
	\item $J=\{s_7\}$.
  \item $J=\{s_1,s_6\},\{s_1,s_2,s_6\},\{s_1,s_2,s_3,s_6\},\{s_1,s_5,s_6\},$
  or $\{s_1,s_2,s_5,s_6\}$.
  \item $J=\{s_6,s_7\}$.
  \item $J=\{s_1,s_7\}$ or $\{s_1,s_2,s_7\}$.
  \item $J=\{s_1,s_6,s_7\},\{s_1,s_2,s_6,s_7\}$.
\end{enumerate}
	\item $E_8$. Let $S=\{s_1,s_2,s_3,s_4,s_5,s_6,s_7,s_8\}$.
\begin{enumerate}
  \item $J=\phi$.
	\item $J=\{s_1\}, \{s_1,s_2\}, \{s_1,s_2,s_3\}$ or $\{s_1,s_2,s_3,s_4\}$.
	\item $J=\{s_7\}$ or $\{s_6,s_7\}$.
	\item $J=\{s_8\}$.
  \item $J=\{s_1,s_7\},\{s_1,s_2,s_7\},\{s_1,s_2,s_3,s_7\}, \{s_1,s_2,s_3,s_4,s_7\}$,\\
  $\{s_1,s_6,s_7\},\{s_1,s_2,s_6,s_7\}, \{s_1,s_2,s_3,s_6,s_7\}$
  or $\{s_1,s_2,s_5,s_6\}$.
  \item $J=\{s_7,s_8\}$.
  \item $J=\{s_1,s_8\}, \{s_1,s_2,s_8\}$ or $\{s_1,s_2,s_3,s_8\}$.
  \item $J=\{s_1,s_7,s_8\},\{s_1,s_2,s_7,s_8\}$.
\end{enumerate}
	\item $F_4$. Let $S=\{s_1,s_2,s_3,s_4\}$.
\begin{enumerate}
	\item $J=\phi$.
	\item $J=\{s_1\}$ or $\{s_1,s_2\}$.
	\item $J=\{s_4\}$ or $\{s_3,s_4\}$.
	\item $J=\{s_1,s_4\}$.
\end{enumerate}
	\item $G_2$. Let $S=\{s_1,s_2\}$.
\begin{enumerate}
	\item $J=\phi$.
	\item $J=\{s_1\}$.
	\item $J=\{s_2\}$.
\end{enumerate}
\end{enumerate}

See Corollary 3.5 of \cite{R7} for more discussion.

Associated with $J\subseteq S$ is the rational polytope $\mathscr{P}_\lambda$,
where $\lambda$ is chosen so that $J=\{s\in S\;|\; s(\lambda)=\lambda\}$.
This polytope determines a projective torus embedding $X(J)$ via the ``central
fan construction", or more naively as
\[
X(J)=[\overline{\rho_\lambda(T)}\setminus\{0\}]/K^*,
\]
where $\rho_\lambda$ is the irreducible representation of $G$ with highest weight
$\lambda$. $X(J)$ is independent of $\lambda$ and depends only on $J$.
The following theorem vindicates the introduction of descent systems.

\begin{theorem} \label{ratsmsimpleemb2.thm}
The following are equivalent.
\begin{enumerate}
	\item $J$ is combinatorially smooth.
	\item $X(J)$ is rationally smooth.
	\item $\mathbb{P}(M)$ is rationally smooth.
\end{enumerate}
\end{theorem}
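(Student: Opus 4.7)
The plan is to establish the three-way equivalence by proving (1) $\Leftrightarrow$ (2) directly from Danilov's characterization of rationally smooth toric varieties, and then (2) $\Leftrightarrow$ (3) via the two reduction theorems of Section~\ref{ratsm.sec}.

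For the first equivalence, I would observe that $X(J) = [\overline{\rho_\lambda(T)} \setminus \{0\}]/K^*$ is by construction the projective toric variety attached, via the normal fan, to the rational polytope $\mathscr{P}_\lambda = \mathrm{Conv}(W \cdot \lambda)$. By Definition~\ref{combsmooth.def}, $J$ is combinatorially smooth precisely when $\mathscr{P}_\lambda$ is simple. Danilov's theorem \cite{Dan} characterizes rationally smooth projective toric varieties as those whose defining fan is simplicial, equivalently, whose polytope is simple. This identifies (1) and (2) without further work.

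For (2) $\Leftrightarrow$ (3), the key is to translate both statements into a condition on the $\sim_0$-class of the torus monoid $\overline{T}$. Applied to $\overline{T}$ viewed as a commutative reductive monoid with zero, Theorem~\ref{mainratsm.thm} says $X(J)$ is rationally smooth iff $\overline{T} \sim_0 \mathbb{C}^m$. On the other hand, Theorem~\ref{ratsmemb.thm} reduces rational smoothness of $\mathbb{P}(M)$ to rational smoothness of every $M_e$ with $e \in E(M) \setminus \{0\}$, and Theorem~\ref{mainratsm.thm} recasts each of those conditions as $\overline{T_e} \sim_0 \mathbb{C}^{m_e}$. The bridge between the two families of conditions is the local product decomposition of $M \setminus \{0\}$ as $M_{e_1} \times (\text{affine factor})$ near each rank-one idempotent $e_1 \in E_1$, noted in the discussion following Theorem~\ref{ratsmemb.thm}. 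Since every $e \in E(M) \setminus \{0\}$ lies above (and is controlled by) the rank-one idempotents it dominates, and since the $\sim_0$-relation is well-behaved under passage to faces of $\overline{T}$, the single condition $\overline{T} \sim_0 \mathbb{C}^m$ is equivalent to the entire family $\overline{T_e} \sim_0 \mathbb{C}^{m_e}$ as $e$ ranges over $E(M) \setminus \{0\}$.

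The main obstacle is this last bookkeeping step: propagating the combinatorial condition at the vertices of $\mathscr{P}_\lambda$ (which correspond to $E_1$) to all faces of $\mathscr{P}_\lambda$, or equivalently, controlling the local structure of $\overline{T}$ at every idempotent in terms of the rank-one ones. Once the Brion--Rittatore local product structure \cite{Brion2,R2} is invoked to carry out this propagation, all three conditions in the theorem collapse into a single combinatorial statement about the vertices of $\mathscr{P}_\lambda$, and the equivalence follows.
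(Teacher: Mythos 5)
Your first equivalence, (1) $\Leftrightarrow$ (2) via Danilov's simplicial criterion applied to the normal fan of $\mathscr{P}_\lambda$, is fine and matches the intent of Definition \ref{combsmooth.def} and Theorem \ref{almostsmooth.thm}. The gap is in your treatment of (2) $\Leftrightarrow$ (3), at the sentence claiming that Theorem \ref{mainratsm.thm}, applied to $\overline{T}$, says that $X(J)$ is rationally smooth iff $\overline{T}\sim_0\mathbb{C}^m$. Theorem \ref{mainratsm.thm} characterizes rational smoothness of the affine monoid \emph{including at its zero element}; for the commutative monoid $\overline{T}$ the condition $\overline{T}\sim_0\mathbb{C}^m$ says the whole cone over $\mathscr{P}_\lambda$ is simplicial, which is strictly stronger than rational smoothness of $X(J)=[\overline{T}\setminus\{0\}]/K^*$, which sees only $\overline{T}$ away from $0$, i.e.\ only the vertex cones of $\mathscr{P}_\lambda$. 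The wonderful case $J=\emptyset$ with $\mathrm{rank}(W)\geq 2$ is a concrete counterexample to your bridge: $\mathscr{P}_\lambda$ is the permutohedron, which is simple, so $X(\emptyset)$ and $\mathbb{P}(M)$ are rationally smooth, yet the cone over $\mathscr{P}_\lambda$ has $|W|$ extremal rays in dimension $\dim T+1$ and is therefore not simplicial, so $\overline{T}\not\sim_0\mathbb{C}^m$ (the paper itself notes that $M\setminus\{0\}$, not $M$, is rationally smooth in this case). Hence the single condition onto which you propose to collapse everything is not equivalent to the family of local conditions, and your final ``propagation'' paragraph cannot be carried out as stated.

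The repair is to apply Theorem \ref{mainratsm.thm} only to the local monoids, which is exactly how the paper combines its two ingredients. By Theorem \ref{ratsmemb.thm} and the local product structure of \cite{Brion2,R2}, $\mathbb{P}(M)$ is rationally smooth iff $M_e$ is rationally smooth for every $e\in E_1$; by Theorem \ref{mainratsm.thm} applied to each $M_e$ (a reductive monoid with zero element $e$), this holds iff $\overline{T_e}\sim_0\mathbb{C}^{m_e}$, where $T_e$ is the maximal torus of $G_e$ and $\overline{T_e}=(\overline{T})_e$. Running the same two theorems for the commutative monoid $\overline{T}$ shows that $X(J)=\mathbb{P}_\epsilon(\overline{T})$ is rationally smooth iff the identical family of conditions $(\overline{T})_e\sim_0\mathbb{C}^{m_e}$, $e\in E_1(\overline{T})$, holds; and these conditions say precisely that every vertex cone of $\mathscr{P}_\lambda$ is simplicial, i.e.\ that $\mathscr{P}_\lambda$ is simple. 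All three statements are thus matched through the rank-one idempotents (the vertices of $\mathscr{P}_\lambda$), never through the apex of the cone.
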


Indeed, Theorem \ref{ratsmsimpleemb2.thm} follows directly from Theorem
\ref{mainratsm.thm} and Theorem \ref{ratsmemb.thm}.

These toric varieties $X(J)$ are of interest in there own right. The following
Theorem is proved in \cite{R8.5}.

\begin{theorem}   \label{bettinumber.thm}
Assume that $X(J)$ is rationally  smooth. Then the Poincar\'e polynomial
of $X(J)$ is
\[
P(X(J),t)=\sum_{w\in W^J}t^{2\nu(w)}.
\]
where $\nu$ is as in Definition \ref{augmented.def}.
\end{theorem}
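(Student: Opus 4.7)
\medskip

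\noindent\textbf{Proof proposal.} The plan is to apply the Bialynicki--Birula decomposition to $X(J)$ using a one-parameter subgroup adapted to the Bruhat order on $W^J$, and to identify the resulting cells combinatorially with the ascent data. Since by Theorem~\ref{ratsmsimpleemb2.thm} the rational smoothness of $X(J)$ forces $J$ to be combinatorially smooth, Theorem~\ref{almostsmooth.thm} tells us that $\mathscr{P}_\lambda$ is a simple polytope of dimension $d=|S^J|$. The $T$-fixed points of $X(J)$ are exactly the vertices of $\mathscr{P}_\lambda$, and these are in natural bijection with $W^J$ via $w\mapsto w\lambda$.

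Next I would choose a generic one-parameter subgroup $\mu\colon K^\ast\to T$ with the property that the linear functional $x\mapsto\langle\mu,x\rangle$ on $\mathrm{Conv}(W\lambda)$ is strictly monotone along the Bruhat order on $W^J$: if $u<v$ in $W^J$ then $\langle\mu,u\lambda\rangle<\langle\mu,v\lambda\rangle$. With this choice the $BB$-decomposition yields
\[
X(J)=\bigsqcup_{w\in W^J} C_w,
\]
where $C_w$ is the cell attracted to $w\lambda$. Because $X(J)$ is rationally smooth, each $C_w$ is a rational cell (combining Corollary~\ref{mainratsm.cor} with the results of Section~5 of \cite{R6}), so the Poincar\'e polynomial reduces to
\[
P(X(J),t)=\sum_{w\in W^J}t^{2\dim_{\mathbb{C}}C_w}.
\]

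The core combinatorial step is to show that $\dim_{\mathbb{C}}C_w=\nu(w)$. By simplicity there are exactly $|S^J|$ edges of $\mathscr{P}_\lambda$ emanating from $w\lambda$, and by the description of $W$-polytopes in \cite{PR1} these edges are indexed bijectively by $S^J$: to $r\in S^J$ is associated the edge from $w\lambda$ to $(wr)_0\lambda$, where $(wr)_0$ is the minimal length representative of $wrW_J$ (cf.\ Remark~\ref{minlength.rk}). The weights of $T$ on the Zariski tangent space at $w\lambda$ are parameterized by these edges, and $\dim_{\mathbb{C}}C_w$ counts the weights that are positive with respect to $\mu$. By the monotonicity of $\mu$ and Proposition~\ref{dichotomy.prop}, the dichotomy $S^J=A^J(w)\sqcup D^J(w)$ matches the positive/negative dichotomy of weights, giving $\dim_{\mathbb{C}}C_w=|A^J(w)|=\nu(w)$.

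The main obstacle I anticipate is precisely this third step: justifying that the edges of $\mathscr{P}_\lambda$ at $w\lambda$ can be indexed by $S^J$ in such a way that the condition $(wr)_0>w$ in the Bruhat order on $W^J$ coincides geometrically with the edge pointing in the $\mu$-positive direction. This amounts to combining the explicit face description of $\mathscr{P}_\lambda$ from \cite{PR1} with Proposition~\ref{dichotomy.prop}, and checking that a generic $\mu$ can actually be chosen to respect the Bruhat order — tractable in the classical types but requiring some care in the exceptional combinatorially smooth cases listed after Theorem~\ref{almostsmooth.thm}. Once this matching is in place the formula $P(X(J),t)=\sum_{w\in W^J}t^{2\nu(w)}$ follows immediately.
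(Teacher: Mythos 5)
Your outline is the natural one, and it is essentially the strategy this survey advertises: the descent system is introduced precisely as ``a combinatorial substitute for the infinitesimal part of Bialynicki--Birula's method,'' and the permutahedron computation in \S\ref{hpoly.sub} is the prototype. Note, however, that the paper itself gives no proof of Theorem \ref{bettinumber.thm}; it defers entirely to \cite{R8.5}. Also, your worry about choosing $\mu$ case by case in the exceptional types is unfounded: any regular antidominant one-parameter subgroup works uniformly, since for $\lambda$ dominant and $u<v$ distinct in $W^J$ the difference $u\lambda-v\lambda$ is a nonzero nonnegative combination of simple roots, so $\langle\mu,\cdot\rangle$ automatically refines the Bruhat order on the vertices $w\lambda$.

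The one genuine soft spot is your third step. You compute $\dim_{\mathbb{C}}C_w$ by reading off ``the weights of $T$ on the Zariski tangent space at $w\lambda$.'' That argument is only available when $X(J)$ is \emph{smooth} at $w\lambda$; combinatorial smoothness of $J$ makes $\mathscr{P}_\lambda$ simple but not necessarily Delzant, so $X(J)$ can be singular at its fixed points and the tangent space need not have dimension $|S|$ or decompose into edge weights. The correct substitute is to count the closed irreducible $T$-stable curves through $w\lambda$ --- these are exactly the edges of $\mathscr{P}_\lambda$ at $w\lambda$, indexed by $S^J$ via $r\mapsto(wr)_0\lambda$ as in \cite{PR1} and Remark \ref{minlength.rk} --- and then to invoke Theorem 1.1 and Lemma 1.3 of \cite{Brion1} together with the rational-cell results of \cite{R6} (cf.\ Corollary \ref{mainratsm.cor}) to conclude that $C_w$ is a rational cell whose dimension is the number of $\mu$-positive edges, namely $|A^J(w)|=\nu(w)$ by Proposition \ref{dichotomy.prop}. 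With that repair your argument closes. Alternatively, one can avoid the $BB$ decomposition altogether by proving the purely combinatorial identity $\sum_{w\in W^J}t^{\nu(w)}=h_{\mathscr{P}_\lambda}(t)$ --- assigning each face of the simple polytope to the unique vertex at which all of its edges are ascents, exactly as done for the permutahedron in \S\ref{hpoly.sub} --- and then applying Danilov's theorem \cite{Dan} that $P(X,t)=h_X(t^2)$ for rationally smooth projective toric varieties; this is the route most consistent with how the surrounding text is written.
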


See Examples 4.6 and 4.9 of \cite{R8.5} for the details of the following two examples.

\begin{example}
Let $(W_n,S_n)=<s_1,s_2,...,s_n>$ ($n\geq 2$), where
$S_n=\{s_1,s_2,... ,s_n\}$ (the symmetric group) and
let $J=\{s_3,s_4,...,s_n\}\subset S_n$. Then $X(J)=X_n(J)$ is rationally smooth
and
\[
P(X_n(J),t)=t^{2n}+(n+2)t^{2(n-1)}+(n+2)t^{2(n-2)}+...+(n+2)t^4+(n+2)t^2+1.
\]
\end{example}

\begin{example}
In this example we consider the root system of type $B_l$. Let $E$ be a real vector
space with orthonormal basis $\{\epsilon_1,...,\epsilon_l\}$. Then
\begin{enumerate}
\item[] $\Phi^+=\{\epsilon_i-\epsilon_j\;|\; i<j\}\cup\{\epsilon_i+\epsilon_j\;|\;
        i\neq j\}\cup\{\epsilon_i\}$, and
\item[] $\Delta =\{\epsilon_1-\epsilon_2,...,\epsilon_{l-1}-\epsilon_l,\epsilon_l\}=\{\alpha_1,...,\alpha_l\}$.
\end{enumerate}
Let $S=\{s_1,s_2,...,s_{l-1},s_l\}$ be the corresponding set of simple
reflections. Here we consider the case
\[
J=\{s_1,...,s_{l-1}\}.
\]
One checks that $X(J)$ is rationally smooth. An easy calculation, as in Example 4.9
of \cite{R8.5}, yields
\[
P(X(J),t)=\sum_{w\in W^J}t^{2\nu(w)}=\sum_{A\subset \{1,...,l\}}t^{2|A|}=(1+t^2)^l.
\]
\end{example}

Theorem \ref{bettinumber.thm}
is a fundamental ingredient in the calculation of the $H$-polynomal of a simple embedding.
See Theorem \ref{thejewel.thm} below.

\section{Examples}

In this section we calculate the $H$-polynomials of several types of embeddings. First we
discuss all projective, semisimple rank two embeddings.
Then we discuss the wonderful embedding, which was the major motivation for the entire
theory of descent systems and $H$-polynomials.
Finally we discuss rationally smooth, {\bf simple embeddings}. These simple embeddings
illustrate the role of descent systems (\S\ref{descentsys.sec}).

\subsection{Semisimple Rank Two Embeddings}

In this subsection we produce an explicit formula for the
$H$-polynomial of an embedding of type $A_2$, $C_2$ and $G_2$. More details
concerning these examples are written down in \cite{R6}.

Let $G$ be a semisimple group of type $A_2$, $C_2$ or $G_2$, and let $\rho$ be a rational
representation of $G$. As before we define $X_\rho$ to be the $G\times G$-embedding associated
with $\rho$. Then $X_\rho$ is rationally smooth since any two-dimensional torus embedding
is rationally smooth.

Each of these semisimple groups $G$ has dimension $2N+2$ where $N$ is the length of the
longest element in the Weyl group of $G$. Furthermore the Weyl group of $G$ has order
$2N$. One obtains that $N=3$ for type $A_2$, $N=4$ for type $C_2$ and $N=6$
for type $G_2$.

Let $B$ be a Borel subgroup of $G$.
There are three cases to consider here, depending on the closed $G\times G$-orbits.

\begin{enumerate}
\item[(I)]All closed $G\times G$-orbits are isomorphic to $G/B\times G/B$.
\item[(II)] Exactly one closed $G\times G$-orbit is not isomorphic to $G/B\times G/B$.
\item[(III)] Exactly two closed $G\times G$-orbits are not isomorphic to $G/B\times G/B$.
\end{enumerate}

\begin{example} \label{genericssrk2.ex}
All closed $G\times G$-orbits are isomorphic to $G/B\times G/B$.
After a simple calculation, as in Example 6.1 of \cite{R6}, we obtain that the
$H$-polynomial of $X$ is as follows.
\[
H_X(t)=[1+(k-1)t+2k(t^2+\dots+t^N)+(k-1)t^{N+1}+t^{N+2}]H(G/B)
\]
where $H(G/B)=(1+t)(1+t+\dots+t^{N-1})$ and $k$ is the number of closed
$G\times G$-orbits.
\end{example}

\begin{example} \label{nongeneric1ssrk2.ex}
Exactly one closed $G\times G$-orbit is not isomorphic to $G/B\times G/B$.
After a simple calculation, as in Example 6.2 of \cite{R6}, we obtain that
the $H$-polynomial of $X$ is as follows.
\[
H_X(t)=[t^{N+3}+kt^{N+2}+3kt^{N+1}+(4k+1)(t^N+\dots+t^3)+3kt^2+kt+1]H(G/P)
\]
where $H(G/P)=1+t+\dots+t^{N-1}$ and $k$ is the number of closed
$G\times G$-orbits isomorphic to $G/B\times G/B$.
\end{example}

\begin{example} \label{nongeneric2ssrk2.ex}
Exactly two closed $G\times G$-orbits are not isomorphic to $G/B\times G/B$.
After a simple calculation, as in Example 6.3 of \cite{R6}, we obtain that the
$H$-polynomial of $X$ is as follows.
\[
H_X(t)=[t^{N+3}+kt^{N+2}+(3k+1)t^{N+1}+(4k+2)(t^N+\dots+t^3)+(3k+1)t^2+kt+1]H(G/P)
\]
where $H(G/P)=1+t+\dots+t^{N-1}$ and $k$ is the number of closed
$G\times G$-orbits isomorphic to $G/B\times G/B$.
\end{example}

\subsection{The Wonderful Embedding} \label{wonderful.sub}

The {\bf wonderful embedding} corresponds to the case of an embedding $X_\rho$ where the representation $\rho$ is irreducible with highest weight in general position. This corresponds to the situation
of Theorem \ref{almostsmooth.thm} where $J=\emptyset$. We single it out because it has a
special significance in the theory of embeddings \cite{DP}. A semisimple monoid $M$ is
called {\em canonical} if $\Lambda_1=\{e\}$,
and $C_G(e)$ is a maximal torus (this is the smallest the centralizer of an idempotent
can be). These monoids have been studied in detail by Putcha and the author in \cite{PR2}.
Any two canonical monoids $M$ and $M'$ have the same $H$-polynomial since $\mathbb{P}(M)=(M\setminus\{0\})/Z \cong \mathbb{P}(M')=(M'\setminus\{0\})/Z'$
as $G\times G$-varieties. $X$ is related to the
much-studied wonderful embedding of $G/Z$ and we have obtained an explicit cell decomposition
$X=\bigsqcup_r C_r$ of $X$ in \cite{R4}.

\[ H_{\mathbb{P}(M)}(t) = \left[\sum_{u\in W}{t{^{l(w_0)-l(u)+
|I_u|}}}\right]\left[\sum_{v\in W}t^{l(v)}\right] \]
where $I_u=\{s\in S\;|\; u<us \}$. $I_u$ is called the {\em ascent set} of $u$. In the
notation of Theorem \ref{thebiganswer.thm}
\[
\nu(e)=|I_u|
\]
where $e=ue_1u^{-1}$.
Thus the Poincar\'e polynomial of $\mathbb{P}(M)$ is
\[ P(t) = \left[\sum_{u\in W}{t{^{2(l(w_0)-l(u)+
|I_u|)}}}\right]\left[\sum_{v\in W}t^{2l(v)}\right]
\]

\subsection{Simple Embeddings}

In this section we discuss {\bf combinatorially smooth} subsets $J$ of $S$. These subsets
correspond to the rationally smooth $G\times G$-embeddings  $X_\rho$ of a semisimple
group $G$ with $\rho$ an irreducible representation. The formulas of this subsection are the
culmination of the results of \cite{R6,R7,R8,R8.5,R9}.

Recall that
\[
X_\rho = \mathbb{P}(J)
\]
for some unique $J\subseteq S$. In particular, $X_\rho$ depends on $J$, but not $\rho$.
See Definition \ref{combsmooth.def}
and Theorem \ref{almostsmooth.thm}. These embeddings are
{\bf simple embeddings} in the sense that they have exactly one closed $G\times G$-orbit.
We obtain the $H$-polynomial of such $X_\rho$ in terms of the augmented poset
$(E_1,\leq,\{\nu_s\})$. See Definition \ref{augmented.def}. These simple embeddings
correspond to $\mathscr{J}$-irreducible monoids. See \cite{PR1} for a detailed discussion of
$\mathscr{J}$-irreducible monoids. See \cite{R9} for a detailed account of the results of
this section.

The result that gets us going here is the following.

\begin{theorem}  \label{ratsmsimpleemb.thm}
The following are equivalent.
\begin{enumerate}
\item[i)] $J\subseteq S$ is combinatorially smooth.
\item[ii)] $\mathbb{P}(J)$ is rationally smooth.
\end{enumerate}
\end{theorem}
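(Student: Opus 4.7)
The plan is to recognize that this theorem is essentially the content of Theorem~\ref{ratsmsimpleemb2.thm} applied to the special case of a $\mathscr{J}$-irreducible monoid $M$ of type $J$, for which $X_\rho=\mathbb{P}(M)=\mathbb{P}(J)$. The equivalence is not proven in isolation; rather, it should be deduced by combining Theorem~\ref{mainratsm.thm} (the internal structural characterization of rationally smooth reductive monoids) with Theorem~\ref{ratsmemb.thm} (the reduction of rational smoothness of $\mathbb{P}_\epsilon(M)$ to the idempotent slices $M_e$).

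The proof proceeds in three logical moves. First, I would apply Theorem~\ref{ratsmemb.thm} to rewrite condition (ii) as the assertion that $M_e$ is rationally smooth for every $e\in E(M)\setminus\{0\}$. Next, I would apply Theorem~\ref{mainratsm.thm} to each of these $M_e$: rational smoothness of $M_e$ is equivalent to the closure $\overline{T_e}$ of a maximal torus of its unit group being $\sim_0$-equivalent to some affine space $\mathbb{C}^{m_e}$. Since every such $\overline{T_e}$ sits as a localized piece of the ambient torus closure $\overline{T}\subseteq M$, this reduces the whole problem to a condition purely about the torus embedding associated to $\overline{T}$, and hence about the polytope $\mathscr{P}_\lambda = \mathrm{Conv}(W\cdot\lambda)$ for a $\lambda$ realizing $J=\{s\in S\mid s(\lambda)=\lambda\}$. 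By Danilov's combinatorial criterion for rationally smooth toric varieties, the equivalence $\overline{T_e}\sim_0\mathbb{C}^{m_e}$ for every $e$ is precisely the statement that each vertex of $\mathscr{P}_\lambda$ is simple, i.e.\ that $\mathscr{P}_\lambda$ is a simple polytope. This is Definition~\ref{combsmooth.def}, the combinatorial smoothness of $J$, which is condition (i).

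The main obstacle is the dictionary between the idempotent lattice $E(\overline{T})$ of the $\mathscr{J}$-irreducible monoid and the face lattice $\mathscr{F}_\lambda$ of the polytope $\mathscr{P}_\lambda$. One must verify that the slices $M_e$ correspond to the vertex stars of $\mathscr{P}_\lambda$ in such a way that a local simplicity condition at each vertex exactly matches the $\sim_0$-equivalence to an affine space. This correspondence is established in the theory of $\mathscr{J}$-irreducible monoids developed in \cite{PR1} and refined in \cite{R8}, and it is precisely the content that makes Theorem~\ref{almostsmooth.thm} the combinatorial manifestation of Theorem~\ref{mainratsm.thm}. Once this translation is accepted, the chain of equivalences assembles with no additional input, and the statement follows by citing the two theorems exactly as indicated after Theorem~\ref{ratsmsimpleemb2.thm}.
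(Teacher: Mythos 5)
Your proposal is correct and follows essentially the same route the paper intends: it deduces the statement by specializing Theorem~\ref{ratsmsimpleemb2.thm}, i.e.\ by combining Theorem~\ref{ratsmemb.thm} (reduction to the slices $M_e$) with Theorem~\ref{mainratsm.thm} (the $\sim_0$-characterization via $\overline{T}$), and then translating to the simplicity of $\mathscr{P}_\lambda$ through the idempotent/face-lattice dictionary of \cite{PR1} and Theorem~\ref{almostsmooth.thm}. This is exactly the chain the paper gestures at when it says the result ``follows directly from the results of Sections \ref{ratsm.sec} and \ref{descentsys.sec}.''
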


Theorem \ref{ratsmsimpleemb.thm} follows directly from the results of Sections \ref{ratsm.sec}
and \ref{descentsys.sec}.

Let $J\subseteq S$ be combinatorially smooth and let $s\in S\setminus J$, $w\in W^J$.
Then there is at most one irreducible component $C_s\subseteq J$ such that,
for some $t\in J$, $st\neq ts$. Set
\begin{enumerate}
  \item[a)] $\delta(s)=|C_s|+1$, and
	\item[b)] $\nu_s(w)=|\{r\in S^J_s\;|\;w<wr\;\}|$. (which, by a previous definition,
	equals $|A_s^J(w)|$)
\end{enumerate}
Notice that $\delta(s)=1$ if and only if $st=ts$ for all $t\in J$.

Let $w_0\in W^J$ be the longest element (so that $l(w_0)= dim(U_J)$, where $U_J$ is the
unipotent radical of $P_J$.).

The following Theorem is proved in \cite{R9}.

\begin{theorem}                \label{thejewel.thm}
Let $M$ be $\mathscr{J}$-irreducible of type $J\subseteq S$.
Assume that $J\subseteq S$ is combinatorially smooth.
Then the $H$-polynomial of $\mathbb{P}(M)$ is given by
\[
H_{\mathbb{P}(M)}(t)=\left(\sum_{w\in W^J}t^{l(w_0)-l(w)+\nu(w)}\right)\left(\sum_{v\in W^J}t^{l(v)}\right)
\]
where $\nu(w)=\sum_{s\in S\setminus J}\delta(s)\nu_s(w)$ and
$H(J)=\sum_{v\in W^J}t^{l(v)}$, the $H$-polynomial of $G/P_{J}$.
\end{theorem}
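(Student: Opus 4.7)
The plan is to deduce the theorem from Theorem \ref{thebiganswer.thm}(2) by combining it with a combinatorial identification of $\nu(e)$ in terms of the descent system $(W^J, S^J)$.

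First, since $M$ is $\mathscr{J}$-irreducible of type $J$, the set $E_1$ of rank-one idempotents of $\overline{T}$ forms a single $W$-orbit with common stabilizer $W_J$. Consequently every parabolic $P_e$ is conjugate to $P_J$, so hypothesis (2) of Theorem \ref{thebiganswer.thm} is satisfied and yields
\[
H_{\mathbb{P}(M)}(t) = \left[\sum_{e \in E_1} t^{l(w_0) - l(w(e)) + \nu(e)}\right] H(G/P_J),
\]
where the second factor is $\sum_{v \in W^J} t^{l(v)}$ by the usual Bruhat decomposition of $G/P_J$. This already accounts for the right-hand factor in the claimed product, so only the bracket has to be recognised as $\sum_{w \in W^J} t^{l(w_0)-l(w)+\nu(w)}$.

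Next, I would reindex the sum via the bijection $W^J \to E_1$, $w \mapsto w e_1 w^{-1}$, where $e_1$ is the distinguished rank-one idempotent with centralizer $W_J$. Under this bijection $w(e)=w$, so matters reduce to establishing the identity
\[
\nu(w e_1 w^{-1}) \;=\; \sum_{s \in S \setminus J} \delta(s)\, \nu_s(w) \qquad (w \in W^J).
\]

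The crux is this last identity, and it is the main obstacle. By the analysis in Section 4.2 of \cite{R6}, $\nu(e)$ measures the dimension of the toric part of the BB-cell $X(e)$ that sits as a fibre over $G/P_e$ under $\pi_e$. Since $J$ is combinatorially smooth, Theorem \ref{almostsmooth.thm} tells us that $\mathscr{P}_\lambda = \mathrm{Conv}(W\cdot\lambda)$ is simple, so each vertex $w(\lambda)$ has exactly $|S\setminus J|$ incident edges, indexed by the simple reflections outside $J$. By Proposition \ref{dichotomy.prop}, the edges that ``go up'' relative to the chosen one-parameter subgroup of Theorem \ref{monoidbb.thm} are precisely those enumerated by the ascent set $A^J(w)=\bigsqcup_{s} A^J_s(w)$, whose cardinality is $\sum_s \nu_s(w)$.

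The subtle point producing the weighting $\delta(s)=|C_s|+1$ is that each ascending edge does not contribute a single dimension to $\nu(e)$. Rather, the idempotent at the endpoint of such an edge sits inside a Levi subsystem of type $A_{\delta(s)-1}$ built from $\{s\}\cup C_s$ (the type-$A$ structure is forced by condition (3a) of Theorem \ref{almostsmooth.thm}), and the associated rational cell in $\mathbb{P}(M)$ accordingly has dimension $\delta(s)$ rather than $1$. This refines Theorem \ref{bettinumber.thm} by tracking each $s\in S\setminus J$ with its own multiplicity $\delta(s)$, and summing over all ascents gives the identity above. As a consistency check, the wonderful case $J=\emptyset$ has $\delta(s)\equiv 1$, so the formula specialises to the one of Section \ref{wonderful.sub} with $\nu(e)=|I_u|$.
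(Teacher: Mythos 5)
Your overall architecture is the right one, and it matches the route the paper indicates: since $M$ is $\mathscr{J}$-irreducible, $E_1$ is a single $W$-orbit with stabilizer $W_J$, all the $P_e$ are conjugate to $P_J$, so part (2) of Theorem \ref{thebiganswer.thm} applies, and the bijection $W^J\to E_1$, $w\mapsto we_1w^{-1}$, reduces everything to the single identity $\nu(we_1w^{-1})=\sum_{s\in S\setminus J}\delta(s)\nu_s(w)$. That reduction is correct and is exactly what Remark \ref{itsnu.rk} foreshadows. The problem is that this identity is the entire mathematical content of the theorem beyond Theorem \ref{thebiganswer.thm}, and you do not prove it; you assert it via the claim that ``the associated rational cell in $\mathbb{P}(M)$ accordingly has dimension $\delta(s)$ rather than $1$.'' Nothing in your argument establishes that dimension count. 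Worse, your own framing is internally inconsistent: you first describe $\nu(e)$ as the dimension of the \emph{toric} part of the fibre of $\pi_e$, but Theorem \ref{bettinumber.thm} shows that the purely toric count is the \emph{unweighted} $\sum_s\nu_s(w)$ (that is what computes $P(X(J),t)$). So the extra $(\delta(s)-1)\nu_s(w)=|C_s|\,\nu_s(w)$ cannot come from the torus closure at all; it has to be extracted from unipotent directions attached to the rank-two idempotents lying over $e$ along each ascending edge, via the local structure theory of $M$ (Definition 5.4 of \cite{R6} and the analysis carried out in \cite{R9}). That accounting is the hard step, and it is missing.

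Two smaller points. First, your appeal to condition (3a) of Theorem \ref{almostsmooth.thm} to claim that $\{s\}\cup C_s$ is a ``Levi subsystem of type $A_{\delta(s)-1}$'' is not right: (3a) forces $C_s$ itself to be of type $A$ with the node adjacent to $s$ at an end, but the bond joining $s$ to $C_s$ need not be simple (e.g.\ type $B_n$ with $J=\{s_n\}$), so $\{s\}\cup C_s$ need not be of type $A$, and in any case it has $\delta(s)=|C_s|+1$ nodes, not $\delta(s)-1$. Second, the consistency check against the wonderful case $J=\emptyset$ is fine but carries no weight, since there every $\delta(s)=1$ and the weighting you need to justify is invisible. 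As it stands the proposal is a correct reduction plus a plausibility argument for the key lemma, not a proof.
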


\begin{example}   \label{allbutone.ex}
Let $M=M_{n+1}(\mathbb{C})$. Then $M$ is $\mathscr{J}$-irreducible of type $J\subset
S$, where $J=\{s_2,s_3,...,s_n\}$ and $S=S_n=\{s_1,s_2,...,s_n\}\subset W_n$
is of type $A_n$ ($n\geq 1$). In this example
 \begin{enumerate}
   \item[] $S^J=\{s_1,s_2s_1,s_3s_2s_1,...,s_n\cdots s_1\}$, and\\
           $W^J=S^J\sqcup\{1\}$.
 \end{enumerate}
Write $a_i=s_i\cdots s_1$ if $i>1$, and $a_0=1$.
An elementary calculation yields
\begin{enumerate}
\item[] $S\setminus J=\{s_1\}$,\\
        $l(a_i)=i$,\\
        $w_0=s_n\cdots s_1$,\\
        $\delta(s_1)=n$,\\
        $\nu_{s_1}(a_i)=n-i$,\\
        $H(J)=\sum_{i=0}^nt^{2i}$, and \\
        $\mathbb{P}(M)=\mathbb{P}^{(n+1)^2-1}(\mathbb{C})$.
\end{enumerate}
Another elementary calculation (using Theorem~\ref{thejewel.thm}) then yields
\[
H_{\mathbb{P}(M)}(t)=\left(\sum_{i=0}^nt^{(n-i)(n+1)}\right)\left(\sum_{i=0}^nt^i\right)
=\sum_{i=0}^{(n+1)^2-1}t^i.
\]
\end{example}

\begin{example}         \label{allbuttwo.ex}
In this example we illustrate Theorem~\ref{thejewel.thm} by calculating the Poincar\'e
polynomial of $\mathbb{P}(M)$ where $M$ is $\mathscr{J}$-irreducible of type
$J\subset S$, where $S=S_n=\{s_1,s_2,...,s_n\}\subset W_n$ is of type $A_n$ ($n\geq 2$) and
$J=J_n=\{s_3,s_4,...,s_n\}$.

If $w\in W^J_n$ we can write $w=a_pb_q$ where $a_p=s_p\cdots s_1$ ($1\leq p\leq n$)
and $b_q=s_q\cdots s_2$ ($2\leq q\leq n$). We also adopt the peculiar but useful
convention $a_0=1$ and $b_1=1$. Thus
\[
W^J_n=\{a_pb_q\;|\;0\leq p\leq n\;\text{and}\;1\leq q\leq n\}
\]
with uniqueness of decomposition.

Now $S\setminus J=\{s_1,s_2\}$ so that $C_{s_1}=\phi$ and $C_{s_2}=\{s_3,...,s_n\}$.
Thus,
\begin{enumerate}
  \item[i)] $\delta(s_1)=1$, and
  \item[ii)] $\delta(s_2)=(n-2)+1=n-1$.
\end{enumerate}
Then, from Example \ref{thategfromwgspbppp.ex},
\begin{enumerate}
  \item[i)]  $\nu_{s_1}(a_pb_q)=1$ if $p<q$ and\\
             $\nu_{s_1}(a_pb_q)=0$ if $p\geq q$.
  \item[ii)] $\nu_{s_2}(a_pb_q)=n-q$.
\end{enumerate}
Thus, by definition,
\begin{enumerate}
	\item[i)]   $\nu(a_pb_q)=(n-1)(n-q)+1$ if $p<q$ and
	\item[ii)]  $\nu(a_pb_q)=(n-1)(n-q)$ if $p\geq q$.
\end{enumerate}
Finally,
\begin{enumerate}
	\item[i)]  $l(a_pb_q)=p+q-1$, and
	\item[ii)] $a_nb_n\in W^J$ is the longest element.
\end{enumerate}
Thus, for $w=a_pb_q\in W^J$, we obtain by elementary
calculation that
\[
l(w_0)-l(w)+\nu(w)=n-p + n(n-q) + \epsilon
\]
where $\epsilon=1$ if $0\leq p<q\leq n$, and $\epsilon=0$ if $n\geq p\geq q\geq 1$.
Thus
\[
\sum_{w\in W^J}t^{l(w_0)-l(w)+m(w)}=\sum_{0\leq p<q\leq n}t^{n-p + n(n-q) + 1}
+\sum_{n\geq p\geq q\geq 1}t^{n-p + n(n-q)}
\]
The other factor here is
\[
H(J)=\sum_{w\in W^J}t^{l(w)}=\sum_{i=1}^ni(t^{i-1}+t^{2n-i}).
\]
Finally we obtain
\[
H_{\mathbb{P}(M)}(t)=\left(\sum_{0\leq p<q\leq n}t^{n-p + n(n-q) +
1}+\sum_{n\geq p\geq q\geq 1}t^{n-p + n(n-q)}\right)\left(\sum_{i=1}^ni(t^{i-1}+t^{2n-i})\right).
\]
\end{example}

\begin{example}    \label{coolcube.ex}
In this example we consider the root system of type $B_l$. Let $E$ be a real vector
space with orthonormal basis $\{\epsilon_1,...,\epsilon_l\}$. Then
\begin{enumerate}
\item[] $\Phi^+=\{\epsilon_i-\epsilon_j\;|\; i<j\}\cup\{\epsilon_i+\epsilon_j\;|\;
        i\neq j\}\cup\{\epsilon_i\}$, and
\item[] $\Delta =\{\epsilon_1-\epsilon_2,...,\epsilon_{l-1}-\epsilon_l,\epsilon_l\}=\{\alpha_1,...,\alpha_l\}$.
\end{enumerate}
Let $S=\{s_1,s_2,...,s_{l-1},s_l\}$ be the corresponding set of simple
reflections. Here we consider the case the $\mathscr{J}$-irreducible
monoid $M$ of type
\[
J=\{s_1,...,s_{l-1}\}\subseteq S.
\]
We make the following identification.
\[
W^J\cong \{1\leq i_1<i_2<...<i_k\leq l\}
\]
as follows. Given such a sequence, $1\leq i_1<i_2<...<i_k\leq l$, we
define
\[
w(\epsilon_v)=\epsilon_{i_v}\;\text{for}\; 1\leq v\leq k,
\]
and
\[
w(\epsilon_{k+v})=-\epsilon_{j_v}\;\text{for}\; 1\leq v\leq l-k,
\]
where $l\geq j_1>j_2>...>j_{l-k}\geq 1$
(so that $\{1,...,l\}=\{i_1,i_2,...,i_k\}\sqcup\{j_1,j_2,...,j_{l-k}\}$).
One can check that $w\in W^J$
and that, conversely, any element of $W^J$ is of this form.

With these identifications we let $w\in W^J$. We now recall that
\[
A^J(w)=\{r\in S^J\;|\; w<wr\}
\]
and that
\[
S^J=\{s_1\cdots s_l,s_2\cdots s_l,...,s_i\cdots s_l,...,s_{l-1}s_l,s_l\}.
\]
Let $w\in W^J$ correspond, as above, to $i_1<...<i_k$ and
$j_1>...>j_{l-k}$. Let $r_i=s_i\cdots s_l\in S^J$.
By the calculations of \cite{R7}, $w<wr_i$ if and only if $i\leq k$. Thus we obtain
\[
A^J(w)=\{s_1\cdots s_l,...,s_k\cdots s_l\}=\{r\in S^J\;|\; w<wr\}.
\]
Now we can use Theorem~\ref{thejewel.thm} above to obtain the
$H$-polynomial of $M$. Let us first assemble the relevant information.

\begin{enumerate}
 \item $S\setminus J=\{s_l\}$.
 \item $\delta(s_l)=C_{s_l}+1=|\{s_1,...,s_{l-1}\}|+1=l$.
 \item If $w\in W^J$ then $\nu_{s_l}(w)=k$ where
\[
w \leftarrow\rightarrow \{1\leq i_1<i_2<...<i_k\leq l\}
\]
as above.
 \item $\nu(w)=l\nu_{s_l}(w)=kl$.
 \item $l(w_0)-l(w)=\sum_{i\in M'(w)}i$ where $M'(w)=\{i\;|\;
 w(\epsilon_j)=\epsilon_i\;\text{for some}\;j\}=\{i_1,i_2,...,i_k\}$,
 and where $w_0\in W^J$
 is the longest element (notice that $l(w_0)=l(l+1)/2$).
\end{enumerate}
Collecting terms we obtain that,
for $w\in W^J$,
\[
l(w_0)-l(w)+\nu(w)=[\sum_{i\in M'(w)}i]+l|M'(w)|=\sum_{i\in M'(w)}(i+l).
\]
After recalling some elementary generating functions, and applying Theorem~\ref{thejewel.thm},
we obtain that
\[
H_{\mathbb{P}(M)}(t)=\left[\Pi_{k=1}^l(1+t^{k+l})\right]\left[\Pi_{k=1}^l(1+t^k)\right].
\]
The $\Pi_{k=1}^l(1+t^k)$ factor here is $H(G/P_J)=\sum_{v\in W^J}t^{l(v)}$
and the $\Pi_{k=1}^l(1+t^{k+l})$ factor is $\sum_{w\in W^J}t^{l(w_0)-l(w)+\nu(w)}$.
\end{example}

\vspace{20pt}

\noindent Lex E. Renner \\
Department of Mathematics \\
University of Western Ontario \\
London, N6A 5B7, Canada \\

\enddocument
\begin{thebibliography}{99}

\bibitem{AB} V. Alexeev, M. Brion, {\em Stable reductive varieties II. Projective case},
Adv. Math.184(2004), 380-408.

\bibitem{BB} A. Bialynicki-Birula, {\em Some theorems on the actions of
algebraic groups}, Annals of Math. 98(1973), 480-497.

\bibitem{BL} S. Billey, V. Lakshmibai, {\em Singular Loci of Schubert Varieties},
Progress in Mathematics vol. 182, Birkhauser, Boston, 2000.

\bibitem{Brion0} M. Brion, {\em The behaviour of the Bruhat decomposition at infinity},
Comment. Math. Helv. 73(1998), 137-174.

\bibitem{Brion1} M. Brion, {\em Rational smoothness and fixed points of torus actions},
Transformation Groups 4(1999), 127-156.

\bibitem{Brion2} M. Brion, {\em Local structure of algebraic monoids}, Semigroup Forum,
to appear.

\bibitem{BJ} M. Brion, R. Joshua, {\em Intersection Cohomology of Reductive Varieties},
J. Eur. Math. Soc. 6(2004), 465-481.

\bibitem{Dan} V. I. Danilov, {\em The geometry of toric varieties}, Russian
Math. Surveys 33(1978), 97-154.

\bibitem{DP} C. de Concini, C. Procesi, {\em Complete symmetric
varieties}, Springer Lecture Notes in Mathematics 996(1983), 1-44.

\bibitem{Mc} C. McRory, {\em A Characterization of Homology Manifolds},
J. London Math. Soc. 16(1977), 146-159.

\bibitem{P1} M. S. Putcha, {\em Linear algebraic monoids}, Cambridge
University Press, 1988.

\bibitem{PR1} M. S. Putcha and L. E. Renner, {\em The system of
idempotents and lattice of J-classes of reductive algebraic monoids},
Journal of Algebra 116(1988), 385-399.

\bibitem{PR2} M. S. Putcha and L. E. Renner, {\em The canonical
compactification of a finite group of Lie type}, Trans. Amer. Math. Soc.
337(1993), 305-319.

\bibitem{R0} L. E. Renner, {\em Classification of semisimple algebraic
monoids}, Trans. Amer. Math. Soc. 292(1985), 193-223.

\bibitem{R1} L. E. Renner, {\em Analogue of the Bruhat decomposition
for algebraic monoids}, Journal of Algebra 101(1986), 303-338.

\bibitem{R2} L. E. Renner, {\em Classification of semisimple
varieties}, Journal of Algebra 122(1989), 275-287.

\bibitem{R4} L. E. Renner, {\em An explicit cell decomposition of the
canonical compactification of an algebraic group}, Can. Math. Bull.,
46(2003), 140-148.

\bibitem{R5} L. E. Renner, {\em ``Linear algebraic monoids"}, Encyclopedia
of Mathematical Sciences, vol. 134, Subseries: Invariant Theory, vol. 5,
Springer-Verlag, 2005.

\bibitem{R6} L. E. Renner, {\em The $H$-polynomial of a semisimple monoid},
J. of Algebra 319(2008), 360-376.

\bibitem{R7} L. E. Renner, {\em Descent Systems for Bruhat Posets},
J. of Alg. Comb. 29(2009), 413-435.

\bibitem{R8} L. E. Renner, {\em Rationally smooth algebraic monoids},
Semigroup Forum, to appear.

\bibitem{R8.5} L. E. Renner, {\em Weyl Groups, Descent Systems and Betti Numbers},
Rocky Mountain Journal of Mathematics, to appear.

\bibitem{R9} L. E. Renner, {\em The $H$-polynomial of an Irreducible
Representation}, to appear.

\bibitem{Sol2} L. Solomon, {\em An introduction to reductive monoids},
``Semigroups, formal languages and groups", J. Fountain, Ed., Kluwer
Academic Publishers, 1995, 295-352.

\bibitem{Spring} T. A. Springer, {\em Intersection cohomology of $B\times B$ -
orbit closures in group compactifications}, Journal of Algebra 258(2002), 71-111.
\end{thebibliography}
